\makeatletter \@namedef{subjclassname@2020}{%
	
	\textup{2020} Mathematics Subject Classification} \makeatother
\newcommand{\C}{\mathbb{C}}
\newcommand{\N}{\mathbb{N}}
\newcommand{\R}{\mathbb{R}}
\newcommand{\aol}{\mathcal{A}}
\newcommand{\bol}{\mathcal{B}}
\newcommand{\pol}{\mathcal{P}}
\newcommand{\fol}{\mathcal{F}}
\newcommand{\de}{\mathrm{deg}\,}
\newcommand{\ca}{\mathrm{card}\,}
\newcommand{\be}{\begin{equation}}
\newcommand{\ee}{\end{equation}}
\newtheorem{proposition}{Proposition}[section]
\newtheorem{thm}[proposition]{Theorem}
\newtheorem{lemma}[proposition]{Lemma}
\newtheorem{corol}[proposition]{Corollary}
\theoremstyle{definition}
\newtheorem{definition}[proposition]{Definition}
\newtheorem{example}[proposition]{Example}
\newtheorem{problem}[proposition]{Problem}
\newtheorem{remark}[proposition]{Remark}
\begin{document}

\title[Polynomial meshes on algebraic sets]{Polynomial meshes on algebraic sets}
\author[L. Bia\l as-Cie\.{z}]{Leokadia Bialas-Ciez }
\address{Faculty of Mathematics and Computer Science, Jagiellonian University, {\L}ojasiewicza~6, 30-348 Krak\'ow, Poland}
\email{leokadia.bialas-ciez@uj.edu.pl}

\author[A. Kowalska]{Agnieszka Kowalska}
\address{Institute of Mathematics, Pedagogical  University of Krakow,
	Podchor\c{a}\.zych~2, 30-084 Krak\'ow, Poland}
\email{agnieszka.kowalska@up.krakow.pl}
\author[A. Sommariva]{Alvise Sommariva}
\address{Department of Mathematics {{Tullio Levi-Civita}}, University of Padua, Via Trieste 63, 35121 Padua, Italy}
\email{alvise@math.unipd.it}

\subjclass[2020]{Primary {41A17}, Secondary {41A63, 41A05}}
\keywords{admissible mesh, norming set, polynomial grid, algebraic set, division inequality}
\thanks{The work was partially supported by the National Science Center, Poland grant No. 2017/25/B/ST1/00906}

\begin{abstract} Polynomial meshes (called sometimes 'norming sets') allow us to estimate the supremum norm of polynomials on a fixed compact set by the norm on its discrete subset. We give a general construction of polynomial weakly admissible meshes on compact subsets of arbitrary algebraic hypersurfaces in $\C^{N+1}$. They are preimages by a projection of meshes on compacts in $\C^N$. The meshes constructed in this way are optimal in some cases. Our method can be useful also for certain algebraic sets of codimension greater than one. To illustrate applications of the obtained theorems, we first give a few  examples and finally report some numerical results. In particular, we present  numerical tests (implemented in Matlab), concerning the use of such optimal polynomial meshes for interpolation and least-squares approximation, as well as for the
evaluation of the corresponding Lebesgue constants.
\end{abstract}
\maketitle
\section{Introduction}

The problem with proper estimation of a norm of polynomials on a given compact set by the norm on a discrete subset is an exciting subject of research because of a large number of applications. First results of this type were shown more than 100 years ago but some new insights were given in 2008 by Calvi and Levenberg. Since then, "{\it admissible meshes}" defined in \cite{CL} have become a subject of numerous  investigations, see e.g., \cite{BDMSV}, \cite{Pi16}, \cite{Bo18}, \cite{Vi18}, \cite{Kr19} and the references therein. They are sometimes called {\it norming sets, polynomial meshes} or {\it Marcinkiewicz-Zygmund array}. Due to their nice properties  (see e.g., \cite{LSV}), they play a relevant role in recent multivariate interpolation and approximation (see e.g., \cite{CL}, \cite{BDMSV}).

\begin{definition}
	Let $K$ be a compact set in $\C^N$.
	We say that a family of finite subsets $\{\aol_n\}_n$ of $K$ represent {\it norming sets} on $K$ if for all $p\in\pol_n(\C^N)$ (polynomials of degree at most $n$ of variable $z\in\C^N$) we have the inequality
	\be \label{mesh} \|p\|_K \ \le \ C_n \: \|p\|_{\aol_n}
	\ee
	where $\|\cdot\|_K$ and $\|\cdot\|_{\aol_n}$ are uniform norms on $K$ and $\aol_n$ respectively, and $C_n$ is a positive constant depending only on $n$ and $K$. 	Moreover,
	\begin{itemize}
		\item if $C_n$ is independent of $n$ and $\ca\aol_n$ are of polynomial growth in $n$ then  $\{\aol_n\}_n$ are called {\it admissible meshes} (AM for short),
		\item if $C_n$ and $\ca\aol_n$ are of polynomial growth in $n$ then  $\{\aol_n\}_n$ are called {\it weakly admissible meshes} (WAM for short),
		\item if $C_n$ is independent of $n$ and $\ca\aol_n=\mathcal{O}({\rm dim}\,\pol_n(K))$ then  $\{\aol_n\}_n$ are called {\it optimal polynomial meshes} (OPM for short). The space $\pol_n(K)$ can be defined as $\pol_n(\C^N)_{|_K}$.
	\end{itemize}
\end{definition}

Observe that, by inequality (\ref{mesh}), any set $\aol_n$ is {\it determining for polynomials} $\pol_n(K)$, i.e., the condition $p \equiv 0$ on $\aol_n $ implies $p\equiv 0$ on $K$ for any $p\in \pol_n(K)$. Therefore, ${\rm dim}\,\pol_n(K)$ is the minimal cardinality of $\aol_n$. A set $K$ is called {\it polynomially determining} if it is determining for the whole space $\pol(\C^N)=\bigcup_{n=0}^\infty \pol_n(\C^N)$.

\pagebreak

Optimal polynomial meshes have been constructed on many polynomially determining compact sets, e.g., sections of discs, ball, convex bodies, sets with regular boundary (see \cite{Vi18}, \cite{Pi16}, \cite{Kr19} and the references given therein). Regarding subsets of algebraic varieties (that, of course, are not polynomially determining),  norming sets are known only for a few cases. 

\vskip 5mm

\begin{problem} How to construct polynomial meshes on compact subsets of arbitrary algebraic sets in $\C^N$?
\end{problem}


The problem seems to be intricate and difficult. So far it has only been solved for a few 
compacts like sections of a sphere, a torus, a circle and  curves in $\C$ with analytic parametrization (see, e.g., \cite{SV21}, \cite{SV18},  \cite{Vi18'}, \cite{PV10}). In these cases, admissible meshes are transferred by some analytical map from certain polynomially determining set with sufficiently relevant meshes.     

\vskip 1mm

In this paper we give a general construction of polynomial meshes on arbitrary algebraic hypersurface in $\C^N$. The method can also be applied to some algebraic sets of codimension greater than one.  

\vskip 1mm

More precisely, we construct weakly admissible meshes on a compact subset $E$ of an algebraic variety $V$ by means of meshes $\{\aol_\ell\}_\ell$ on a projection of $E$ (see Theorems \ref{ThmAdmMesh} and \ref{ThmAdmMesh2}). We show that preimages of $\{\aol_\ell\}_\ell$ by the projection represent norming meshes on $E\subset V$. For some specific algebraic sets we obtain optimal polynomial meshes (Theorem \ref{yk=s0} and Corollary \ref{CorOptAdm1}). In the case of algebraic hypersurfaces the problem seems to be solved  but for algebraic sets of codimension greater than one, it is much more complicated. In this case, we give some partial results (see Theorem \ref{TwiceSpecific}, Corollary \ref{CorOptAdmCodim} and Theorem \ref{ThmAdmMesh2}) and a few  examples to show how to use our theorems in more complicated cases.           

\vskip 1mm

Consider an arbitrary algebraic hypersurface $V\subset \C^{N+1}$ defined as the zero level set of a~polynomial $s=s(z,y)$ where $z=(z_1,...,z_N)$, $y\in \C$. Without loss of generality (taking a linear change of variables if necessary) we can assume that 
\be \label{syk}
s(z,y)=y^k+\sum_{j=0}^{k-1} \varsigma_j(z)\, y^j,
\ee 
for some $k\in \N=\{1,2,...\}$ and polynomials $\varsigma_0,...,\varsigma_{k-1}\in \pol(\C^N)$.  In this sense, any algebraic set of codimension one is given by a polynomial $s$ in the above form. 

\vskip 1mm

We denote by $\pol(\C^N)$ the space of all polynomials of variable $z\in \C^N$. We will sometimes write $\pol(z)$ for this space and $\pol_n(z)$ for $\pol_n(\C^N)$. To give a precise definition of $\pol_n(K)$ we take the ideal $I(K)$ of polynomials from $\pol_n(\C^N)$ equal to zero on the set~$K\subset\C^N$ and set $\pol_n(K)$ as the quotient space $\pol_n(\C^N)/I(K)$. We define $\pol(K)$ in a similar way.


\vskip 1mm

For an arbitrary algebraic set $V\subset \C^N$ and $p\in \pol(V)$ let
\be \label{def_deg_V} \de_V p:=\min\{\de q \: : \: q\in \pol(\C^N), \ q_{|_V}\equiv p\}.
\ee
In Section 2  we will prove estimates of $\de p$ by $\de_V p$ for any algebraic hypersurface $V$. We will also give such estimates for some algebraic sets of codimension greater than one. These estimates will play a crucial role in the next section devoted to constructions of meshes on compact subsets of algebraic varieties. Applications of these theoretical results will be presented in the last section where we give some examples of optimal polynomial meshes and weakly admissible meshes on compact subsets of algebraic sets.    

\vskip 3mm

\section{Estimation of degree of polynomials on algebraic sets}

\vskip 3mm 

\subsection{The basic lemma} Consider a polynomial $s$ of the form (\ref{syk}) of total degree  $d:=\de s \ge k$. Let 
\[ p(z,y)=\sum_{j=0}^{k-1} p_j(z)\, y^j \]
for $z\in \C^N$, $y\in \C$ and some given polynomials $p_0,...,p_{k-1}\in \pol(z)$. Fix an arbitrary polynomial $w\in\pol(z,y)$  and set 
\be \label{q=p+sw} q=p+sw. \ee 
We will estimate \ $\de p$ \ by \ $\de q$ \ in the following way.   

\begin{lemma} \label{lemat ze stopniami}
	With the above notations, if $w\not\equiv0$ then the inequality
		\[ \de p \le d\cdot\de q -dk+d \]
holds and is optimal. Moreover,
	\[ {\it if} \  \ d=k \ \ {\it then} \ \ \de p\le \de q.\]
\end{lemma}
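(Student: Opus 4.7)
The plan is to exploit that $s$ is monic in $y$ of $y$-degree $k$, so polynomial division in $\C[z][y]$ by $s$ is well defined and produces a unique decomposition whose remainder has $y$-degree less than $k$. Since the given $q=p+sw$ already sits in that normal form, running the long division of $q$ by $s$ must return exactly the prescribed $w$ and $p$; this puts the decomposition within reach of the standard algorithm.

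I would then execute that algorithm and track total degrees stage by stage. Writing $q^{(0)}:=q$ and, while $\mathrm{deg}_y q^{(t)}\ge k$, $q^{(t+1)}:=q^{(t)}-a_t(z)\,y^{m_t-k}\,s$, where $m_t:=\mathrm{deg}_y q^{(t)}$ and $a_t$ is the coefficient of $y^{m_t}$ in $q^{(t)}$, the algorithm terminates in $T:=\mathrm{deg}_y q-k+1$ steps with $q^{(T)}=p$; the hypothesis $w\not\equiv 0$ is used here precisely to force $\mathrm{deg}_y q\ge k$. The subtracted term has total degree at most $\de a_t+(m_t-k)+d \le (\de q^{(t)}-m_t)+(m_t-k)+d = \de q^{(t)}+(d-k)$, so a routine induction yields $\de q^{(t)}\le \de q+t(d-k)$. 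Taking $t=T$ and using $\mathrm{deg}_y q\le\de q$ produces $\de p \le \de q+(\de q-k+1)(d-k)$, which a short rearrangement rewrites as $d\cdot\de q-dk+d-(k-1)(\de q-k)$ and hence is bounded above by $d\cdot\de q-dk+d$, since $\de q\ge k$. When $d=k$ the per-step growth $d-k$ vanishes and the same chain delivers the sharper conclusion $\de p\le \de q$ immediately.

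For optimality I would exhibit the case $k=1$, $s(z,y)=y+z^d$, $q=y^L$: iterating the division yields $q^{(t)}=(-z^d)^t y^{L-t}$ and terminates with $p=(-z^d)^L$, so $\de p=dL=d\cdot\de q-d+d$ and the bound is saturated. The bookkeeping through the division is routine; the point that I expect to require the most care is maintaining the clean separation between $\mathrm{deg}_y q$ and $\de q$ in the induction, and verifying that the hypothesis $w\not\equiv 0$ is used exactly to guarantee $\mathrm{deg}_y q\ge k$ (and thus $\de q\ge k$), which is what legitimizes the final algebraic simplification to the stated form $d\cdot\de q-dk+d$.
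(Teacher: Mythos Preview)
Your argument is correct and takes a genuinely different route from the paper's. The paper expands $q=p+sw$ explicitly in powers of $y$, reads off all the coefficient identities, and then runs a somewhat intricate case analysis (introducing an auxiliary index set $A$ and arguing by contradiction that $a=\max A$ must equal $n$) to squeeze out the bound. You instead run Euclidean division of $q$ by $s$ in $\C[z][y]$ and track the total degree through each subtraction step; the uniform per-step growth $d-k$ together with the termination bound $T\le \deg_y q-k+1\le \deg q-k+1$ gives the result in one clean induction, with the case $d=k$ falling out for free. Your approach is shorter, more algorithmic, and makes the role of the hypothesis $w\not\equiv 0$ (namely $\deg_y q\ge k$, hence $\deg q\ge k$) completely transparent; the paper's approach has the mild advantage of being self-contained without invoking the division algorithm, and its optimality example ($p=s-y^k$, $w=-1$, $q=-y^k$) shows the bound is attained for \emph{every} pair $d>k$, whereas your $k=1$ example establishes sharpness only along that slice. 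Two cosmetic remarks: your ``$T:=\deg_y q-k+1$'' should really be ``$T\le \deg_y q-k+1$'' since the $y$-degree can drop by more than one at a step, but since $d-k\ge 0$ this only strengthens your bound; and it is worth noting explicitly that uniqueness of the division remainder is what forces the algorithm to output the \emph{given} $p$ rather than merely some remainder of the right $y$-degree.
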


The first estimate is optimal, because for a polynomial $s$ such that $d>k$ and \ $p(z,y)=s(z,y)-y^k$, \ $w(z,y)=-1$ \ we have \ $q(z,y)=-y^k$ and 
	\[ \de p=d =d\cdot \de q -dk +d.\] 

\begin{proof}
	Observe that if the sharp inequality $\de q<k$ were true then $w\equiv0$ contrary to assumption. Therefore, we take up only the case $k\le \de q$. 
	Let $n$ be the degree of $w$ with respect to $y$ and 
	\[ w(z,y)=\sum_{j=0}^n w_j(z)\,y^j \]
	for some polynomials $w_0,...,w_n\in \pol(z)$.	Observe that 
\begin{align*}
q(z,y)=&\sum_{j=0}^{k-1} p_j(z)\, y^j + \left(y^k+\sum_{j=0}^{k-1} \varsigma_j(z)\, y^j\right) \sum_{j=0}^n w_j(z)\,y^j\\
=&\sum_{j=0}^{k-1} \left(p_j(z) + \sum_{i=0}^{j} \varsigma_{j-i}(z) w_i(z) \right)  y^j  + \sum_{j=0}^{n-1} \left(w_j(z) + \sum_{\ell=j+1}^{n} \varsigma_{k+j-\ell}(z) w_\ell(z) \right) y^{j+k}\\
&+ w_n(z)\,y^{k+n}
\end{align*}
where we set $\varsigma_m=0$ for $m<0$.
By assumption, $w\not\equiv 0$ and consequently,
\begin{align*}
\de q=&\max\left\{k+n+\de w_n ,\ \max_{j=0,...,n-1} \left[ j+k + \de \left( w_j + \sum_{\ell=j+1}^{n} \varsigma_{k+j-\ell} w_\ell \right)\right],\ \ \ \right.\\
	 &\ \ \ \ \ \left.\max_{j=0,...,k-1} \left[ j+\de \left( p_j + \sum_{i=0}^{j} \varsigma_{j-i} w_i \right) \right]\right\}.
\end{align*} 
	This yields
	\be \label{k+n} \de q \ge k+n+\de w_n.
	\ee
	If $\de p\le \de q$, we get
	\[ \de p \le \de q +(d-1)k-dk+k\le d\cdot \de q-dk+k\le d\cdot \de q-dk+d. \]
	Therefore, it is sufficient to consider only the case $\de q<\de p$. 
	
	Since 
	\[ \de p=\max_{j=0,...,k-1} (j+\de p_j), \]
	we take such $\ell\in \{0,...,k-1\}$ that $\de p=\ell +\de p_\ell$. It follows that 
	\[ \ell +\de \left( p_\ell + \sum_{i=0}^{\ell} \varsigma_{\ell-i} w_i \right) \le \de q < \de p = \ell +\de p_\ell 	\]
	and so
	\[ \de \left( p_\ell + \sum_{i=0}^{\ell} \varsigma_{\ell-i} w_i \right) < \de p_\ell
	\]
	that implies
	\[ \de p_\ell = \de \sum_{i=0}^{\ell} \varsigma_{\ell-i} w_i \le \max_{i=0,...,\ell}  \de(\varsigma_{\ell-i} w_i)\le \max_{i=0,...,\ell} (d-\ell+i+\de w_i). 	\] 
	Hence
	\be \label{d+i} \de p \le d+i+\de w_i \ \ \ \ \ \mbox{for some} \ \ i\in\{0,...,k-1\}.\ee
	If \ $\de p\le \de q +nd+d-k-n$, \ then from (\ref{k+n}) we have
\begin{align*}
\de p&\le d\cdot \de q-(d-1)\de q +nd+d-k-n \\
&\le d\cdot \de q-(d-1)(k+n) +nd+d-k-n = d\cdot \de q-dk+d
\end{align*}
	and the assertion follows. 	Therefore, we will consider only the case  \[\de p > \de q +n(d-1)+d-k\]
	that yields
	\be \label{de_p>} \de p > \de q +j(d-1)+d-k \ \ \ \ \ \mbox{for all} \ \ j\in\{0,...,n\}.
	\ee
	Let
	\[ A:= \{ \ell\in\{0,...n \}\: : \: \de p\le (\ell+1)d+\de w_\ell\}. \]
	By inequality (\ref{d+i}), we see that the set $A$ is not empty and we can take \ $a:=\max A$. \ We now prove that $a=n$. Indeed, suppose otherwise. Then $a<n$ and inequality (\ref{de_p>}) would imply
	\[ \de w_a\ge \de p -ad-d > \de q +ad-a +d-k-ad -d = \de q -a-k. \]
	Since \ $\de q\ge a+k + \de \left(w_a+\sum\limits_{\ell=a+1}^{n} \varsigma_{k+a-\ell} w_\ell\right) $, \ 
	we would have 
	\[ \de w_a > \de \left(w_a+\sum\limits_{\ell=a+1}^{n} \varsigma_{k+a-\ell} w_\ell\right). \]
	Thus
	\be \label{deg_wa} \de w_a = \de \left(\sum\limits_{\ell=a+1}^{n} \varsigma_{k+a-\ell} w_\ell\right) \le \max_{\ell=a+1,..,n} (d-k-a+\ell +\de w_\ell) \ee
	and so, for some $\ell\in\{a+1,...,n\}$,
\begin{align*}
\de w_\ell \ge&\ \de w_a -d +k +a-\ell \ge \de p -(a+1)d-d+k+a-\ell\\ =&\ \de p -(\ell+1)d + (d-1)(\ell-a)-d+k \ge \de p -(\ell+1)d -1+k\\ \ge&\ \de p -(\ell+1)d
\end{align*}
	which is impossible because $\ell\not\in A$.
	
By the fact that $a=n$ and from inequality (\ref{k+n}) we get
\begin{align*}
\de p \le &(n+1)d +\de w_n \le nd+d +\de q -k-n =d\cdot \de q-(d-1)\de q +nd+\\&+d-k-n\le d\cdot \de q -(d-1)(k+n) +nd +d-k-n	
\end{align*}  
	and the assertion follows. 
	
	For $d=k$ we obtain a better estimate of $\de p$ than this one given in the first case. 
	Let 
	\[ A:=\{\ell\in \{0,...,n\} \: : \: \de p \le \de w_\ell + \ell +k \}. \]
	If the inequality $\de p\le \de q$ did not hold, then (\ref{d+i}) would imply $A\ne\emptyset$ and we could consider $a:=\max A$. From (\ref{k+n}) we would obtain $n\not\in A$. By inequality (\ref{deg_wa}), for some $\ell\in\{a+1,...,n\}$ we would have   
	\[ \de w_\ell \ge \de w_a -d +k +a-\ell \ge \de p -a-k -d+k+a-\ell = \de p  -k-\ell  \] and $\ell$ would belong to $A$ which is a contradiction. 	
\end{proof}

\begin{proposition} \label{Prop_stopnie} With the previous notations we have 
	\[ \de p \le  \max \{ \de q, \ d\cdot \de q-dk+d\} \le d\cdot\de q.
	\] 
	Moreover, if \ $d=k$ \ then \ $\de p \le \de q$. 
\end{proposition}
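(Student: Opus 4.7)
The proposition essentially packages Lemma \ref{lemat ze stopniami} with the trivial case and a little arithmetic, so my plan is very short. I would split on whether $w\equiv 0$ or $w\not\equiv 0$. In the trivial case $w\equiv 0$, one has $q=p$ from (\ref{q=p+sw}), hence $\de p=\de q$, which is clearly bounded by $\max\{\de q,\ d\cdot\de q-dk+d\}$. In the nontrivial case $w\not\equiv 0$, the Basic Lemma supplies directly
\[ \de p \le d\cdot\de q - dk + d, \]
which again lies inside the same maximum. So the first inequality in the proposition is immediate once the two cases are merged.

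For the second inequality $\max\{\de q,\ d\cdot\de q-dk+d\}\le d\cdot\de q$, I would just observe the two arithmetic facts. On the one hand, $\de q\le d\cdot\de q$ because $d\ge k\ge 1$. On the other hand,
\[ d\cdot\de q - dk + d = d\cdot\de q - d(k-1) \le d\cdot\de q \]
since $k\ge 1$. Taking the larger term gives the bound.

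The second assertion, for $d=k$, again splits by whether $w$ vanishes. If $w\equiv 0$, then $q=p$ and there is nothing to prove. If $w\not\equiv 0$, the second half of Lemma \ref{lemat ze stopniami} gives $\de p\le \de q$ at once.

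There is no genuine obstacle here; the entire content lies in the Basic Lemma. The only thing to be careful about is the hypothesis $d\ge k$ made at the start of the subsection, which is needed to guarantee both that $d\ge 1$ (so $\de q \le d\cdot\de q$) and that $d(k-1)\ge 0$ (so the subtraction in the arithmetic step goes the right way).
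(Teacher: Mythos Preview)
Your proposal is correct and matches the paper's approach: the paper states Proposition~\ref{Prop_stopnie} immediately after Lemma~\ref{lemat ze stopniami} without a separate proof, treating it as the obvious corollary obtained by combining the lemma with the trivial case $w\equiv 0$ and the arithmetic you spell out. Your write-up simply makes explicit what the paper leaves to the reader.
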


\vskip 2mm

\subsection{On algebraic hypersurfaces} 

Let $V=V(s)$ be an algebraic set of codimension one, defined by a polynomial $s$, i.e.
\[ V=\{(z,y)\in \C^N\times \C \: : \:s(z,y)=0\}.\] 
Without loss of generality we can assume that $s$ is of form (\ref{syk}).
Since $\pol(V)$ is a quotient space, we can show that any equivalence class from $\pol(V)$ is uniquely determined by a polynomial of the space  
\be \label{def_W} W=\pol(z) \otimes \pol_{k-1}(y):=\{p(z,y)=\sum_{j=0}^{k-1}p_j(z) \: y^j \: : \: p_0,...,p_{k-1}\in\pol(z)\}, \ee
cf. Subsection 4.2 in \cite{BCCK1}. 
Dividing a fixed polynomial $q\in \pol(z,y)$ by $s$ with respect to the variable $y\in \C$, we can find $p\in \pol(z) \otimes \pol_{k-1}(y)$ and $w\in\pol(z,y)$ satisfying condition ({\ref{q=p+sw}) and assumptions of Proposition \ref{Prop_stopnie}. 
The polynomial $p$ is unique and in this way we obtain the linear surjective operator 
\[ \Phi: \pol(z,y) \ni q \mapsto p\in \pol(z) \otimes \pol_{k-1}(y)\] 
such that $q_{|_V}=\Phi(q)_{|_V}$. By the proposition, we can estimate $\de \Phi(q)$ 
\[ \de_V q \le \de \Phi(q) \le \max \{ \de_V q, \ d\cdot \de_V q-dk+d\} \le d\cdot\de_V q
\]
where $\de_V q$ is defined by (\ref{def_deg_V}) and $d=\de s$. We can state equivalently the above inequalities as follows

\begin{thm}
	For all polynomials $p\in \pol(z) \otimes \pol_{k-1}(y)$ we have
	\[ \de_V p \le \de p \le  \max \{ \de_V p, \ d\cdot \de_V p-dk+d\} \le d\cdot\de_V p.
	\] 
	Moreover, if \ $d=k$ \ then \ $\de_V p = \de p$. 
\end{thm}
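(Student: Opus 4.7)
The plan is to derive the theorem directly from Proposition~\ref{Prop_stopnie}, with the key input being Euclidean division with respect to $y$ by the polynomial $s$, which is monic of degree $k$ in $y$.

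The lower bound $\de_V p \le \de p$ is immediate, since $p$ itself is a polynomial in $\pol(\C^{N+1})$ whose restriction to $V$ is $p_{|_V}$; hence $p$ is one of the competitors in the minimum defining $\de_V p$.

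For the upper bound I would pick a representative $q \in \pol(\C^{N+1})$ achieving $\de q = \de_V p$ with $q_{|_V} \equiv p_{|_V}$. Because $s$ is monic of degree $k$ in $y$, Euclidean division of $q$ by $s$ with respect to $y$ yields $q = s w + r$ with $r \in W$ and $w \in \pol(z,y)$. Since $r_{|_V} = q_{|_V} = p_{|_V}$ and both $r$ and $p$ lie in $W$, uniqueness of the canonical representative in $W$ (established as indicated below) forces $r = p$, so $q = p + s w$. Proposition~\ref{Prop_stopnie} then gives $\de p \le \max\{\de q,\, d\cdot\de q - dk + d\} = \max\{\de_V p,\, d\cdot\de_V p - dk + d\}$, and the remaining inequality $\max\{\de_V p,\, d\cdot\de_V p - dk + d\} \le d\cdot\de_V p$ is elementary since $d \ge k \ge 1$. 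For the case $d = k$, the sharper conclusion of Proposition~\ref{Prop_stopnie} gives $\de p \le \de q = \de_V p$, which combined with the lower bound yields $\de p = \de_V p$.

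The only nonroutine point I would need to verify is the uniqueness of the canonical representative in $W$, i.e., that the only $r \in W$ vanishing on $V$ is $r \equiv 0$. This should follow because for each fixed $z \in \C^N$ the slice $s(z,\cdot)$ is a monic polynomial of degree $k$ in $y$ with $k$ roots counted with multiplicity, at which $r(z,\cdot)$, a polynomial in $y$ of degree at most $k-1$, must vanish. For generic $z$ the roots of $s(z,\cdot)$ are pairwise distinct, so $r(z,\cdot) \equiv 0$ on a Zariski-dense set of $z$, forcing each coefficient $r_j \in \pol(z)$ to be identically zero and hence $r \equiv 0$. Once this is in place the argument is essentially a rewriting of the inequality chain displayed just above the theorem for an arbitrary $q \in \pol(z,y)$, specialized to $q \in W$ where $\Phi(q) = q$.
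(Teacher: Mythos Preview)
Your proposal is correct and matches the paper's own argument, which is the short discussion immediately preceding the theorem. Both pick a minimal-degree representative $q$, perform Euclidean division by $s$ in $y$, invoke uniqueness of the $W$-representative (the paper cites \cite{BCCK1} for this, while you sketch it via the generic distinctness of the roots of $s(z,\cdot)$), and then apply Proposition~\ref{Prop_stopnie}.
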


In the specific case where $s(z,y)=y^k+\varsigma_0(z)$, a better estimate of $\de p$ can be found in \cite{BCCK1}.

\begin{thm} \label{deg ze starej pracy} (\cite[Lemma 12]{BCCK1}) \ 
	If \ $V=\{(z,y)\in \C^N\times \C \: : \: y^k+\varsigma_0(z)=0\}$ \ then for any polynomial 
	$p\in \pol(z) \otimes \pol_{k-1}(y)$ we have
	\[ \de_V p \le \de p \le  \max \left\{ 1, \frac{\de \varsigma_0}{k}\right\}\de_V p \ = \ \frac{d}k \ \de_V p
	\] 
	where $d=\de (y^k+\varsigma_0(z))$.
\end{thm}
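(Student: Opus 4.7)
The lower bound $\de_V p \le \de p$ is immediate from the definition of $\de_V$, since $p$ itself is a valid representative. For the upper bound, I would first reduce to a pointwise statement: for any $q \in \pol(z,y)$ with $q_{|_V} = p$, since $s$ is monic of degree $k$ in $y$, Euclidean division in $y$ yields a unique decomposition $q = p' + sw$ with $p' \in W := \pol(z) \otimes \pol_{k-1}(y)$, and the bijection $W \leftrightarrow \pol(V)$ noted earlier in the paper forces $p' = p$. Taking the minimum over such $q$, it suffices to show $\de p \le (d/k) \de q$ whenever $q = p + sw$ with $p \in W$. If $d = k$ (equivalently $\de \varsigma_0 \le k$), the second clause of Lemma \ref{lemat ze stopniami} already gives this. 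So I would focus on the case $d = \delta > k$, where $\delta := \de \varsigma_0$.

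The plan is to introduce a weighted degree $\rho$ on $\pol(z,y)$ giving each $z_i$ weight one and $y$ weight $\delta/k$, so that $\rho(z^\alpha y^j) := |\alpha| + j\delta/k$ and $\rho(f)$ is the maximum of this quantity over monomials of $f$. Two elementary observations then bracket the ordinary degree: since the $y$-exponents in any $p \in W$ are at most $k-1$ and $\delta/k \ge 1$, one has $\de p \le \rho(p)$; and for any $f \in \pol(z,y)$, a direct monomial inspection gives $\rho(f) \le (\delta/k) \de f$. This reduces the theorem to the central claim
\[ \rho(p) \le \rho(q). \]

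For the central claim I would argue by contradiction. Assume $\rho(p) > \rho(q)$, set $D := \rho(p)$, and fix $j^* \in \{0, \ldots, k-1\}$ with $\de p_{j^*} = D - j^*\delta/k$. Expanding $q = p + sw$ in powers of $y$ gives the $y$-coefficients
\[ q_j = p_j + w_{j-k} + \varsigma_0 w_j \]
(with the natural zero conventions outside the valid ranges), and the hypothesis $\rho(q) < D$ forces $\de q_j < D - j\delta/k$ for every nonvanishing $q_j$. Applied at $j = j^*$, where $w_{j^* - k} = 0$ since $j^* \le k-1$, this demands $w_{j^*} \ne 0$ together with an exact top-degree cancellation between $p_{j^*}$ and $\varsigma_0 w_{j^*}$, forcing $\de w_{j^*} = \de p_{j^*} - \delta$. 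I would then walk along the arithmetic progression $j^*, j^* + k, j^* + 2k, \ldots$: at each step $m \ge 1$, either $w_{j^* + mk} = 0$ (case A) or $w_{j^* + mk} \ne 0$ (case B). In case A, the unpaired $w_{j^* + (m-1)k}$ makes $\de q_{j^* + mk} = \de w_{j^* + (m-1)k}$ with weighted contribution to $\rho(q)$ equal to exactly $D$, the desired contradiction; in case B, a further forced top-degree cancellation propagates the chain $\de w_{j^* + mk} = \de w_{j^* + (m-1)k} - \delta$. Since each case-B step drops the degree by $\delta$ while degrees must remain nonnegative, the iteration must enter case A after finitely many steps, yielding the contradiction. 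I expect the principal technical obstacle to be careful bookkeeping of these alternating cases—verifying that each cancellation is truly \emph{forced} rather than merely allowed, and that the weighted contribution to $\rho(q)$ at the terminating step genuinely equals $D$ rather than collapsing via some unanticipated internal vanishing in the top-degree part of $w_{j^* + (m-1)k}$.
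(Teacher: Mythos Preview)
The paper does not supply its own proof of this statement; it is quoted verbatim from \cite[Lemma 12]{BCCK1} without argument, so there is nothing in the present paper to compare against. Your proof is correct and self-contained.

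The weighted degree $\rho$ (weight $1$ on each $z_i$, weight $\delta/k$ on $y$) is the natural device here, since it gives $y^k$ and $\varsigma_0$ the same $\rho$-weight $\delta$; the two easy bracketing inequalities $\de p \le \rho(p)$ for $p\in W$ and $\rho(f)\le(\delta/k)\,\de f$ for arbitrary $f$ then reduce everything to the single claim $\rho(p)\le\rho(q)$. Your coefficient-chase along the progression $j^*,\,j^*+k,\,j^*+2k,\ldots$ establishes this correctly. The bookkeeping you flag as the main obstacle does go through: at each case-B step the cancellation is genuinely forced, because the inductively known value $\de w_{j^*+(m-1)k}=D-j^*\delta/k-m\delta$ equals the threshold $D-(j^*+mk)\delta/k$ that $\de q_{j^*+mk}$ must fall strictly below; and at the terminating case-A step one has $q_{j^*+mk}=w_{j^*+(m-1)k}\ne 0$, whose $\rho$-contribution is exactly $D$.

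A slightly more compressed variant of the same idea, which you may find cleaner: if $\rho(p)>\rho(q)$ then $\rho(p)=\rho(sw)$ and, passing to leading $\rho$-homogeneous parts, $\hat p=-\hat s\,\hat w$ with $\hat w\ne 0$; but $\hat s$ contains the monomial $y^k$, so $\de_y(\hat s\,\hat w)\ge k$, contradicting $\de_y\hat p\le k-1$. This is exactly the contradiction your chain reaches, packaged in one line via the graded-ring fact that the leading part of a product is the product of the leading parts.
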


\begin{remark} \label{dim for hyperalg}
For numerical tests given in Section 5, we need to know what the dimension of the restriction of $\mathcal{P}_n(\mathbb{C}^{N+1})$ to $V$ is. Consider an algebraic hypersurface $V\subset\mathbb{C}^{N+1}$ given by a polynomial equation $s(z,y)=0$, where $z=(z_1,...,z_N)\in \mathbb{C}^N$ and the total degree of $s$ equals $k$. Without loss of generality (taking a linear change of variables if necessary), we can assume that 
\[ s(z,y) = y^k + \sum_{j=0}^{k-1} \varsigma_j(z)\: y^j  \]
and $\varsigma_j\in\mathcal{P}_{k-j}(z)=\mathcal{P}_{k-j}(\mathbb{C}^N)$ for $j=0,...,k-1$. In this case, 
$\mathcal{P}(V)_{|_V}=W_{|_V}$ with $W$ defined by (\ref{def_W}). 
Consider $W_n:=\{p\in W\: : \: {\rm deg}\,p\le n\}$ where deg$\,p$ is the total degree of $p$. The set $W_n$ is a vector space and we can calculate its dimension as follows. If $n\le k-1$ then $W_n=\mathcal{P}_n(\mathbb{C}^{N+1})$ and dim$\,W_n=\binom{N+1+n}{N+1}$. For $n\ge k$, since deg$\,p\le n$ implies deg$\,p_j\le n-j$ for $j=0,...,k-1$, we have the following formula
\[ {\rm dim} \, W_n = \sum_{j=0}^{k-1} {\rm dim}\, \mathcal{P}_{n-j}(\mathbb{C}^{N}) = \sum_{j=0}^{k-1} \binom{N+n-j}{n-j} = \binom{N+1+n}{N+1} - \binom{N+1+n-k}{N+1}. \]
\end{remark}

\vskip 2mm 

\subsection{On some algebraic sets of codimension greater than one} Now consider two polynomials $s_1\in\pol(z,y_1)$, $s_2\in\pol(z,y_1,y_2)$ for $z\in \C^N$, $y_1,y_2\in\C$ such that 
\be \label{s1yk}
s_1(z,y_1)=y_1^{k_1}+\sum_{j=0}^{k_1-1} \tilde{\varsigma}_j(z)\, y_1^j, \ \ \ \ \ d_1:=\de s_1 \ge k_1
\ee \vskip -4mm
\be \label{s2yk}
s_2(z,y_1,y_2)=y_2^{k_2}+\sum_{j=0}^{k_2-1} \tilde{\tilde{\varsigma}}_j(z,y_1)\, y_2^j, \ \ \ \ \ d_2:=\de s_2 \ge k_2,
\ee 
where $\tilde{\varsigma}_0,...,\tilde{\varsigma}_{k_1-1}\in \pol(z)$, $\tilde{\tilde{\varsigma}}_0,...,\tilde{\tilde{\varsigma}}_{k_2-1}\in \pol(z,y_1)$ and  $k_1,k_2\in \N$ are fixed.
The polynomials $s_1,s_2$ determine the algebraic set $V=V(s_1,s_2)=\{s_1=0,s_2=0\}$ of codimension two in $\C^{N+2}$. 
Take an arbitrary polynomial $q\in \pol(z,y_1,y_2)$ and divide it by $s_2$ with respect to $y_2$. In this way we find polynomials 
\[ r(z,y_1,y_2)=\sum_{j=0}^{k_2-1} r_j(z,y_1)\, y_2^j  \ \ \ \ \mbox{and} \ \ \  w\in\pol(z,y_1,y_2) \ \ \mbox{such that} \ \ q=r+s_2\cdot w.\]  
By Proposition \ref{Prop_stopnie}, we have
\[ \de r \le \max\{ \de q, \: d_2 \: (\de q - k_2+ 1)\} \le d_2 \: \de q \]  
and if \ $d_2=k_2$ \ then \ $\de r\le \de q$. \ 
Now we divide every polynomial $r_j$ by $s_1$ to obtain $p_j$ and $w_j$ such that
\[ r_j = p_j + s_1 \cdot w_j \ \ \ \ \ \mbox{and } \ \ \ \ \ \de _{y_1}p_j \le k_1-1.\] 
Let 
\[ p(z,y_1,y_2)=\sum_{j=0}^{k_2-1} p_j(z,y_1)\, y_2^j \ \ \in \pol(z)\otimes\pol_{k_1-1}(y_1)\otimes\pol_{k_2-1}(y_2).\]
Again by Proposition \ref{Prop_stopnie}, we get the estimate
\begin{align*}
\de p&=\max_{j=0,...,k_2-1}(j+\de p_j) \le \max_{j=0,...,k_2-1} \{ j+ \max\{\de r_j,\: d_1(\de r_j-k_1+1 )\}\} \\ &\le \max\{ \de r,\: d_1 \,( \de r -k_1+1)\}  \\ &\le \max\{ \de q, \: d_1 ( \de q -k_1+1), \: d_1 \,[ d_2 \: (\de q - k_2+ 1) -k_1+1]\}
\end{align*} 
that holds in a general case. For some specific cases depending on $d_1,k_1, d_2,k_2$ we have
\[ d_1=k_1 \ \mbox{and} \ d_2=k_2 \ \Rightarrow \ \de p\le \de q\]
\[ d_i>k_i \ \mbox{and} \ d_j=k_j \ \Rightarrow \ \de p\le \max \{\de q, \: d_i\:(\de q -k_i+1)\} \] 
for $i=1$ or $i=2$, $j\ne i$.

\begin{thm}
	For all polynomials $p\in \pol(z)\otimes\pol_{k_1-1}(y_1)\otimes\pol_{k_2-1}(y_2)$ 
	\begin{align*}
\de_V p &\le \de p 
	 \le \max\{ \de q, d_1 ( \de q -k_1+1), d_1 [ d_2 (\de q - k_2+ 1) -k_1+1]\}\\ &\le d_1\, d_2\, \de_V p.	
\end{align*} 
	Moreover, \[ \mbox{if} \ \ d_1=k_1 \ \ \mbox{then} \  \ \de_V p \le \de p \le  d_2\: \de_V p,\]
	\[ \mbox{if} \ \ d_2=k_2 \ \ \mbox{then} \  \ \de_V p \le \de p \le  d_1\: \de_V p,\]
	\[ \mbox{if} \ \ d_1=k_1 \ \ \mbox{and} \ \ d_2=k_2 \ \ \mbox{then} \  \ \de_V p = \de p .\]
\end{thm}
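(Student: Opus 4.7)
The plan is to mimic the hypersurface argument of subsection 2.2, now applied twice: divide first by $s_2$ in the variable $y_2$ and then by $s_1$ in $y_1$, using Proposition \ref{Prop_stopnie} to control the degree growth at each step. This is essentially a repackaging of the derivation displayed immediately before the theorem, re-expressed intrinsically in terms of $\de_V p$.

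First I would dispatch the trivial inequality. Since $p\in\pol(z)\otimes\pol_{k_1-1}(y_1)\otimes\pol_{k_2-1}(y_2)$ is itself an element of $\pol(z,y_1,y_2)$ whose restriction to $V$ equals $p|_V$, the definition (\ref{def_deg_V}) yields $\de_V p\le\de p$. Then I would pick a minimizer $q\in\pol(z,y_1,y_2)$ with $q|_V=p|_V$ and $\de q=\de_V p$.

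Next I would carry out the two-step reduction. Division of $q$ by $s_2$ with respect to $y_2$ gives $q=r+s_2 w$ with $r=\sum_{j=0}^{k_2-1}r_j(z,y_1)\,y_2^j$; Proposition \ref{Prop_stopnie} bounds $\de r\le\max\{\de q,\,d_2(\de q-k_2+1)\}$, sharpened to $\de r\le\de q$ when $d_2=k_2$. Dividing each $r_j$ by $s_1$ with respect to $y_1$ gives $r_j=p_j+s_1 w_j$ with $\de_{y_1}p_j<k_1$, and a second invocation of Proposition \ref{Prop_stopnie} controls each $\de p_j$ in terms of $\de r_j$. Assembling $\tilde p=\sum p_j(z,y_1)\,y_2^j$ and taking the maximum over $j$ reproduces the nested expression stated in the theorem, and the crude bound $\le d_1 d_2\,\de_V p$ then follows since $k_1,k_2\ge 1$ makes each entry of the max at most $d_1 d_2\,\de q$.

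The delicate step is identifying this constructed $\tilde p$ with the original $p$. Both elements lie in $W=\pol(z)\otimes\pol_{k_1-1}(y_1)\otimes\pol_{k_2-1}(y_2)$ and share the same restriction to $V$, so I would argue as in subsection 2.2 by invoking uniqueness of the normal form, i.e.\ injectivity of the restriction map $W\to\pol(V)$; this is the main obstacle, since in codimension two it relies on the tacit assumption that $I(V)$ is generated by $s_1$ and $s_2$, the analogue of the squarefree hypothesis used in the hypersurface case. Once $\tilde p=p$ is granted, the three special cases $d_1=k_1$, $d_2=k_2$, and both simultaneously follow by substituting the sharper $d=k$ conclusion of Proposition \ref{Prop_stopnie} at the corresponding step of the reduction.
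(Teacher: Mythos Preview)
Your proposal is correct and follows essentially the same approach as the paper: the paper's ``proof'' is precisely the two-step division derivation displayed immediately before the theorem statement, applying Proposition~\ref{Prop_stopnie} first with respect to $s_2$ and then to each coefficient with respect to $s_1$, and then rephrasing the resulting bound with $\de q=\de_V p$. You are in fact more explicit than the paper in flagging the uniqueness-of-normal-form step (that the constructed $\tilde p$ coincides with the given $p$), which the paper leaves tacit.
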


\vskip 2mm 

Obviously, one can easily state an analogous result for algebraic sets of codimension $\ell>2$ in $\C^{N+\ell}$ given by polynomials $s_1\in\pol(z,y_1),...,s_\ell\in\pol(z,y_1,...,y_\ell)$.

\begin{remark} \label{dim for alg}
\noindent Similarly to the end of the previous subsection, we give formulas for the dimension of a space of polynomials restricted to an algebraic surface $V$ of codimension 2. Let $V\subset\mathbb{C}^{N+2}$ be an algebraic set defined by two polynomial equation $s_1(z,y)=0$, $s_2(z,y,x)=0$  where $z=(z_1,...,z_N)\in \mathbb{C}^N$ and $y,x\in \mathbb{C}$. Let $k$ be the total degree of $s_1$ and $\overline{k}$ of $s_2$. Assume that  
\[ s_1(z,y)=y^{k}+\sum_{j=0}^{k-1} \tilde{\varsigma}_j(z)\, y^j \ \ \ \ \ \ \mbox{and} \ \ \ \ \ \  
s_2(z,y,x)=x^{\overline{k}}+\sum_{j=0}^{\overline{k}-1} \tilde{\tilde{\varsigma}}_j(z,y)\, x^j . \]
In this case, 
$\mathcal{P}(V)_{|_V}=\overline{W}_{|_V}$ with $\overline{W}$ defined by
\[ W = \left\{ p(z,y,x) = \sum_{j=0}^{\overline{k}-1} p_j(z,y) \: x^j \: : \: p_0, ..., p_{\overline{k}-1} \in W \right\}, \]
where $W$ is given in (\ref{def_W}).
Consider $\overline{W}_n:=\{p\in W\: : \: {\rm deg}\,p\le n\}$ where deg$\,p$ is the total degree of $p$. The dimension of the space $\overline{W}_n$ can be calculated as follows. In the case  $n\le \overline{k}-1$ we have 
\[ \mbox{dim}\,\overline{W}_n= \mbox{dim}\,W_n + \mbox{dim}\,W_{n-1} +...+ \mbox{dim}\,W_0,
\]
and for $n > \overline{k}-1$ we get
\[ \mbox{dim}\,\overline{W}_n= \mbox{dim}\,W_n + \mbox{dim}\,W_{n-1} +...+ \mbox{dim}\,W_{n-\overline{k}+1}. \]
From this, by Remark \ref{dim for hyperalg}, we can obtain precise formulas. For example, if $n\le \min\{k,\overline{k}\}-1$ then $\mbox{dim}\,\overline{W}_n = \binom{N+2+n}{N+2}$. The most useful case seems to be $n\ge k+\overline{k}-1$ when we get
\[ \mbox{dim}\,\overline{W}_n=  \binom{N+1+n}{N+1} - \binom{N+1+n-k}{N+1} + \binom{N+n}{N+1} - \binom{N+n-k}{N+1} \] \[ + \: ... \: + \binom{N+2+n-\overline{k}}{N+1} - \binom{N+2+n-k-\overline{k}}{N+1} \] \[ = \binom{N+2+n}{N+2} - 2\binom{N+2+n-k}{N+2}+ \binom{N+2+n-k-\overline{k}}{N+2} .\]   
\end{remark}

\vskip 5mm

\section{Construction of polynomial meshes on algebraic sets}

\vskip 3mm

\subsection{Division inequality} 
We will prove that preimages by a projection of some norming sets  $\{\aol_\ell\}_\ell\subset K\subset \C^N$ 
represent weakly admissible meshes on a compact subset $E$ of an algebraic variety $V$. These meshes are optimal for some specific algebraic sets. In this case we assume only the existence of admissible meshes on $K$ but for more general varieties we need also a~property of $K$ that is called a {\it division} or {\it Schur-type inequality}. 

 \vskip 2mm
\begin{definition}
Let $K$ be a compact subset of $\mathbb{C}^N$ and $m>0$. We say that $K$ satisfies the {\it division inequality} with exponent $m$ if for any polynomial $q\not\equiv 0$ on $K$ there exists a positive constant $M$ such that for all polynomials $p\in \mathcal{P}(\mathbb{C}^N)$
\begin{equation}\label{division}
\|p\|_K \le M(\deg p + \deg q)^{m\: \deg q}\|pq\|_K
\end{equation}
where $\|\cdot\|_K$ is the sup-norm on the set $K$. 
\end{definition}

One of tools often used to construct norming sets is a so-called Markov property \linebreak 
(i.e., \ $\|{\rm grad}\, p\|_K\le M(\de p)^m\|p\|_K$ with $M,m>0$ independent of $p\in\pol(\C^N)$), see e.g., \cite{CL}. 
In \cite{BCCK1} we proved that any set $K\subset\C^N$ with Markov property satisfies the division inequality, cf. \cite{BC99}. The converse is not possible for several variables, because every Markov set is polynomially determining and this is not a necessary condition for the division inequality. A list of sets satisfying the division inequality contains 
\begin{itemize}
	\item all non-singular connected compacts in the complex plane,
	\item all convex fat compact subsets of $\R^N$,
	\item all compact sets with $\mathcal{C}^2$ boundary, 
	\item all fat subanalytic set in $\R^N$,
	\item all uniformly perfect sets,
	\item filled in Julia sets and some totally disconnected sets, e.g., Cantors,
	\item variate examples of compact subsets of algebraic varieties, see \cite{BCCK1}, \cite{BCCK2}.
\end{itemize}
These last mentioned sets satisfies the division inequality but they do not have Markov property, because algebraic sets are not polynomially determining. 

\vskip 3mm

\subsection{On algebraic hypersurfaces} 
Assume that $s$ is a polynomial in form (\ref{syk}) and set 
\begin{equation}\label{A}
\mathcal{A}=\mathcal{A}(s):=\{z\in\mathbb{C}^N\ :\ {\rm Disc}_y\left(s\right)=0\}=\left\{z\in\mathbb{C}^N\ :\ {\rm Res}_y\left(s,\frac{\partial s}{\partial y}\right)=0\right\}
\end{equation}
where ${\rm Disc}_y\left(s\right)$ is the discriminant of $s$ with respect to $y$ and ${\rm Res}_y\left(s,\frac{\partial s}{\partial y}\right)$ is the resultant of $s,\frac{\partial s}{\partial y}$ in $y$. 
Since the resultant is equal (with accuracy to $\pm$) to the determinant of the Sylvester matrix associated to the polynomials $s,\:\frac{\partial s}{\partial y}$ (see, e.g., \cite[Chap.1]{DE}), the set $\mathcal{A}(s)$ is an algebraic hypersurface in $\mathbb{C}^N$ and we can easily check whether a fixed point $z_0$ belongs to it or not, what is important in Theorem \ref{ThmAdmMesh}.

The following remark reveals the importance of the set $\aol(s)$: the polynomial \linebreak $\C\ni y\mapsto s(z,y)$ has $k$ roots pairwise distinct if and only if $z\in\mathbb{C}^{N}\setminus\aol(s)$. Irreducibility of $s$ implies $\mathbb{C}^{N}\setminus\aol(s)\neq\emptyset$. However, this is not a necessary condition, because the polynomial $w(z,y)=y^2-z^2$, \ $z,y\in \mathbb{C}$ has two distinct roots $y_1,y_2$ for any $z\in \mathbb{C}\setminus \{0\}$. For more details we refer the reader to \cite{BCCK2}. 

Consider a compact set $K\subset \C^N$ and an algebraic hypersurface $V=V(s)\subset \C^{N+1}$ given by a~polynomial $s$ in form (\ref{syk}) of degree $d$. Observe that the set
\be \label{E jako podniesienie K} E=\{(z,y)\in V(s) \:  : \:  z\in K\} 
\ee
is compact. We will transfer norming sets from $K$ to $E$. A construction  of weakly admissible meshes on $E$ is based on the following lemma which has been obtained by a careful analysis of the proof of Proposition 3.3  in \cite{BCCK2} 

\begin{lemma}\label{Lemat za Proposition}
Let $K$ be a compact subset of $\C^N$. There exists a polynomial $\widetilde{q}\in\pol(z)$ such that $\{z\in\mathbb{C}^N\ :\ \widetilde{q}(z)=0\}\subset\mathcal{A}(s)$, $\deg \widetilde{q} \le 2d(k-1)$ and if $F$ is compact subset of $K$ such that $F\setminus\mathcal{A}(s)\neq\emptyset$ then
\[
\|p_j^2\widetilde{q}\|_F\leq \widetilde{C}^2 \|p\|_G^2
\]
for any polynomial $p\in \pol(z) \otimes \pol_{k-1}(y)$ written in the form $p(z,y)=\sum\limits_{j=0}^{k-1} p_j(z)\, y^j$ where $G=\{(z,y)\in V(s) \:  : \:  z\in F\}$ and  a constant $\widetilde{C}$ is independent of $p$ and $F$.
\end{lemma}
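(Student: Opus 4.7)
The natural choice is $\widetilde q(z):=\mathrm{Disc}_y s(z)=\mathrm{Res}_y(s,\partial_y s)(z)$, which by (\ref{A}) makes $\{\widetilde q=0\}=\aol(s)$ and so gives the required inclusion for free. For the degree bound I would expand $\widetilde q$ as the $(2k-1)\times(2k-1)$ Sylvester determinant whose nonzero entries are the coefficients $\varsigma_j(z)$ (or their integer multiples coming from $\partial_y s$), each of $z$-degree at most $d-j$, and add these degrees along an optimal permutation of the matrix; this yields the required $\deg\widetilde q\le 2d(k-1)$.

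The heart of the proof is a pointwise Vandermonde/Cramer identity at each $z\in F\setminus\aol(s)$, where $y\mapsto s(z,y)$ has $k$ distinct roots $y_1(z),\dots,y_k(z)$, each lifting to a point $(z,y_i(z))\in G$. The $k$ identities $p(z,y_i(z))=\sum_{j=0}^{k-1}p_j(z)\,y_i(z)^j$ form a linear system with Vandermonde coefficient matrix $M(z)=(y_i(z)^j)_{1\le i\le k,\,0\le j\le k-1}$, and by construction $(\det M(z))^2=\widetilde q(z)$. Cramer's rule gives $p_j(z)\det M(z)=\det M_j(z)$, where $M_j(z)$ is $M(z)$ with its $j$-th column replaced by $(p(z,y_i(z)))_{i=1}^k$. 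Squaring yields the key identity
\[
p_j(z)^2\,\widetilde q(z)=(\det M_j(z))^2\qquad\text{for every }z\in F\setminus\aol(s).
\]

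To turn this into the stated inequality I would exploit compactness: the coefficients $\varsigma_j$ are bounded on $K$, hence the standard bound on the roots of a monic polynomial in terms of its coefficients produces $M_0=M_0(K,s)$ with $|y_i(z)|\le M_0$ for all $z\in K$ and all $i$. Expanding $\det M_j(z)$ by Leibniz, each of the $k!$ signed products contains a single factor $p(z,y_i(z))$ (bounded by $\|p\|_G$) together with $k-1$ factors $y_i(z)^{j'}$ with $j'<k$ (each bounded by $M_0^{k-1}$), so $|\det M_j(z)|\le k!\,M_0^{(k-1)^2}\|p\|_G$. Combined with the identity, this gives $|p_j(z)^2\widetilde q(z)|\le\widetilde C^2\|p\|_G^2$ on $F\setminus\aol(s)$ with $\widetilde C^2:=(k!)^2 M_0^{2(k-1)^2}$, a constant depending only on $K$ and $s$. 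The bound upgrades to all of $F$ since $p_j^2\widetilde q$ is continuous and vanishes on $\aol(s)$. The main subtlety I anticipate is that the roots $y_i(z)$ are only locally single-valued algebraic functions of $z$, but this is harmless: $p_j(z)^2\widetilde q(z)$ and the Leibniz upper bound on $|\det M_j(z)|$ are both symmetric in the roots, so the identity and the estimate make unambiguous sense at every point of $F\setminus\aol(s)$.
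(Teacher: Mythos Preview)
The paper does not supply its own proof of this lemma; it merely states that the result is extracted from the proof of Proposition~3.3 in \cite{BCCK2}. Your Vandermonde/Cramer argument is exactly the natural mechanism behind such a statement and is essentially correct: the identity $p_j(z)^2(\det M(z))^2=(\det M_j(z))^2$ on $F\setminus\aol(s)$, the Leibniz bound $|\det M_j(z)|\le k!\,\max(1,M_0)^{k(k-1)/2}\|p\|_G$, and the extension to all of $F$ by continuity of $p_j^2\widetilde q$ all go through. Two small clean-ups: replace $M_0$ by $\max(1,M_0)$ so that $|y_i^{j'}|\le M_0^{k-1}$ is honest when $M_0<1$; and note that with the paper's convention $\mathrm{Disc}_y(s)=\mathrm{Res}_y(s,\partial_y s)$ one has $(\det M)^2=(-1)^{k(k-1)/2}\widetilde q$, which is harmless since only $|p_j^2\widetilde q|$ enters the sup-norm.

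There is, however, a genuine gap in your degree bound. Summing the entrywise degree bounds $\deg\varsigma_j\le d-j$ over an arbitrary permutation of the $(2k-1)\times(2k-1)$ Sylvester matrix yields at best
\[
\sum_{r=1}^{k-1}(d-k+\pi(r)-r)+\sum_{r=k}^{2k-1}(d-1+\pi(r)-r)=d(2k-1)-k^2,
\]
since $\sum_r(\pi(r)-r)=0$. This exceeds $2d(k-1)$ whenever $d>k^2$, so the naive Leibniz argument does \emph{not} deliver the stated bound. What does work is to combine two classical facts about the discriminant of a monic degree-$k$ polynomial: it is quasi-homogeneous of weight $k(k-1)$ when $\varsigma_j$ carries weight $k-j$, and it has ordinary degree at most $2(k-1)$ in the coefficients $\varsigma_0,\dots,\varsigma_{k-1}$. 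For any monomial $\prod_j\varsigma_j^{e_j}$ occurring in $\widetilde q$ one then has $\sum_j e_j(k-j)=k(k-1)$ and $\sum_j e_j\le 2(k-1)$, whence
\[
\sum_j e_j(d-j)=(d-k)\sum_j e_j+\sum_j e_j(k-j)\le (d-k)\,2(k-1)+k(k-1)=(k-1)(2d-k)\le 2d(k-1).
\]
With this correction your proof is complete and matches the spirit of the argument the paper imports from \cite{BCCK2}.
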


\begin{thm}\label{ThmAdmMesh} Let $K\subset \mathbb{C}^N$ be a compact with weakly admissible meshes $\{\mathcal{A}_n\}_{n}$ and $E\subset V(s)$ be a set given by (\ref{E jako podniesienie K}). Assume that $K\setminus\aol(s)\ne\emptyset$ and take $z_0$ from this set. If $K$ satisfies the division inequality (\ref{division}) then $\{\mathcal{B}_n\}_{n}$ defined by
\[\mathcal{B}_n=\{(z,y)\in E\ :\ z\in \mathcal{A}_{\ell}\cup \{z_0\}\}
\ \ \mbox{ where } \ \ \ell =\ell(n)=\left\{\begin{array}{ll} 2(n + k^2-k)& \mbox{ ; } \ d=k,\\
2dn& \mbox{ ; } \ d>k
\end{array}\right.\]
represent weakly admissible meshes on $E$, i.e., for every polynomial $p\in\pol_n(z,y)$ we have
\[\|p\|_E \ \le \ D_n \: \|p\|_{\mathcal{B}_n}\] 
and $\{D_n\}_n$ are of polynomial growth in $n$.
\end{thm}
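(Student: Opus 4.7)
The plan is to reduce an arbitrary $p\in\pol_n(z,y)$ to its canonical representative $\Phi(p)\in\pol(z)\otimes\pol_{k-1}(y)$ obtained by dividing by $s$ in the variable $y$, and then to estimate each coefficient of $\Phi(p)=\sum_{j=0}^{k-1}p_j(z)y^j$ using the WAM on $K$, the division inequality, and the key Lemma \ref{Lemat za Proposition}. Since $s\equiv 0$ on $V$, we have $p=\Phi(p)$ on $E$, which gives the crude pointwise bound
\[
\|p\|_E=\|\Phi(p)\|_E\le\Big(\sum_{j=0}^{k-1}C_y^{\,j}\Big)\max_{0\le j\le k-1}\|p_j\|_K,\qquad C_y:=\max_{(z,y)\in E}|y|,
\]
and similarly $\|p\|_{\bol_n}=\|\Phi(p)\|_{\bol_n}$. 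Thus it suffices to show $\max_j\|p_j\|_K\le D_n'\|p\|_{\bol_n}$ with $D_n'$ of polynomial growth in $n$.

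The first key step is to check that $p_j^2\widetilde{q}\in\pol_\ell(z)$, which is precisely what forces the specific formula for $\ell(n)$. Proposition \ref{Prop_stopnie} gives $\de\Phi(p)\le\max\{n,dn-dk+d\}$, so $\de p_j\le n$ in the case $d=k$ and $\de p_j\le dn-dk+d$ in the case $d>k$. Combined with $\de\widetilde{q}\le 2d(k-1)$ from Lemma \ref{Lemat za Proposition}, a short computation yields $\de(p_j^2\widetilde{q})\le 2n+2k(k-1)=\ell$ in the first case and $\de(p_j^2\widetilde{q})\le 2dn=\ell$ in the second. I then apply Lemma \ref{Lemat za Proposition} with the compact set $F=\aol_\ell\cup\{z_0\}\subset K$, which meets the hypothesis $F\setminus\aol(s)\neq\emptyset$ since $z_0\in F\setminus\aol(s)$ and whose lifted set coincides with $\bol_n$. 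The lemma produces
\[
\|p_j^2\widetilde{q}\|_{\aol_\ell}\le\|p_j^2\widetilde{q}\|_F\le\widetilde{C}^{\,2}\|\Phi(p)\|_{\bol_n}^{\,2}=\widetilde{C}^{\,2}\|p\|_{\bol_n}^{\,2},
\]
and the WAM hypothesis on $K$ upgrades the left-hand side to $\|p_j^2\widetilde{q}\|_K\le C_\ell\widetilde{C}^{\,2}\|p\|_{\bol_n}^{\,2}$, with $C_\ell$ of polynomial growth in $\ell=O(n)$.

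Finally, to pass from $\|p_j^2\widetilde{q}\|_K$ back to $\|p_j\|_K$, I invoke the division inequality with $q=\widetilde{q}$ together with the pointwise identity $|p_j\widetilde{q}|^2=|p_j^2\widetilde{q}|\cdot|\widetilde{q}|$:
\[
\|p_j\|_K\le M(\de p_j+\de\widetilde{q})^{m\,\de\widetilde{q}}\|p_j\widetilde{q}\|_K\le M(\de p_j+\de\widetilde{q})^{m\,\de\widetilde{q}}\|\widetilde{q}\|_K^{1/2}\|p_j^2\widetilde{q}\|_K^{1/2}.
\]
Because $\de\widetilde{q}$ is a constant independent of $n$, the prefactor grows only polynomially in $n$; chaining with the previous paragraph yields $\|p_j\|_K\le D_n'\|p\|_{\bol_n}$ with $D_n'$ polynomial in $n$. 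The polynomial bound on $\ca\bol_n$ is immediate, since each $z\in\aol_\ell\cup\{z_0\}$ lifts to at most $k$ points of $\bol_n$ and $\ca\aol_\ell$ is polynomial in $\ell$. The main technical hurdle is the sharp degree bookkeeping that pins down the peculiar formula for $\ell(n)$; once that is settled, the rest is a mechanical chaining of the three cited inputs (Lemma \ref{Lemat za Proposition}, the WAM property, and the division inequality).
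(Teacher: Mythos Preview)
Your proposal is correct and follows essentially the same approach as the paper: both reduce $p$ to its canonical representative $\sum_j p_j(z)y^j$, bound $\|p\|_E$ by $\max_j\|p_j\|_K$ times a geometric constant, and then chain Lemma~\ref{Lemat za Proposition} (applied with $F=\aol_\ell\cup\{z_0\}$, whose lift is exactly $\bol_n$), the WAM inequality on $K$, and the division inequality to control $\|p_j\|_K$ by $\|p\|_{\bol_n}$. The only cosmetic difference is that the paper applies the division inequality directly to $p_j^2$ (so that $\|p_j\|_K=\|p_j^2\|_K^{1/2}$), whereas you apply it to $p_j$ and then invoke the pointwise identity $|p_j\widetilde q|^2=|p_j^2\widetilde q|\,|\widetilde q|$; both routes yield the same polynomial-growth bound, and your explicit degree bookkeeping recovering the formula for $\ell(n)$ matches the paper's.
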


\begin{proof}
For a fixed polynomial $p\in\pol_n(\mathbb{C}^{N+1})$ we can take $q\in\pol(z) \otimes \pol_{k-1}(y)$ such that $q=p$ on $V(s)$ and by Lemma \ref{lemat ze stopniami}, $\de q \le n$ if $d=k$ and $\de q \le d (n -k+1)$ if $d>k$.
Set $q(z,y)=\sum_{j=0}^{k-1} q_j(z) \,y^j$ for some polynomials $q_j\in \pol(z)$. 
We have
\[\|p\|_E=\|q\|_E\le \sum_{j=0}^{k-1}\|q_j\|_K\|y\|_E^j.\]
From Lemma \ref{Lemat za Proposition} there exists  a polynomial $\widetilde{q}$ such that $\{z\in\mathbb{C}^N\ :\ \widetilde{q}(z)=0\}\subset\mathcal{A}(s)$, $\deg \widetilde{q} \le 2d(k-1)$. Since $K\setminus\mathcal{A}(s)\neq\emptyset$, we have $\widetilde{q}\not\equiv 0$ on $K$. By the division inequality, for $j=0,\ldots,k-1$ we have 
\begin{equation}\label{qjonK}\|q_j^2\|_K\le M(2\deg q + 2d(k-1))^{2md(k-1)}\|q_j^2\widetilde{q}\|_K.
\end{equation}
Since $\{\mathcal{A}_n\}_{n}$ are weakly admissible meshes on the set $K$, we obtain
\be\label{qj2WAM} \|q_j^2\widetilde{q}\|_K\le C_{\ell}\|q_j^2\widetilde{q}\|_{\mathcal{A}_\ell\cup\{z_0\}} \ \mbox{ where } \ \ \ell =\ell(n)=\left\{\begin{array}{ll} 2(n + k^2-k)& \mbox{ ; } \ d=k,\\
2dn& \mbox{ ; } \ d>k
\end{array}\right.\ee
and $C_\ell=C_{\ell(n)}$ are of polynomial growth in $\ell$ and hence so are in $n$. By Lemma \ref{Lemat za Proposition},
\[\|q_j^2\widetilde{q}\|_{\mathcal{A}_\ell\cup\{z_0\}}\le \widetilde{C}^2 \|q\|_{\mathcal{B}_n}^2. \]
On combining this with (\ref{qjonK}) and (\ref{qj2WAM}), we deduce that
\[\|q_j\|_K\le (MC_\ell)^{1/2} \: \widetilde{C}\ \:(\ell(n))^{md(k-1)}\|q\|_{\mathcal{B}_n}.\]
Consequently,
\[\|p\|_E=\|q\|_E\ \le  D_n \: \|p\|_{\mathcal{B}_n}\]
and $D_n=(MC_\ell)^{1/2} \: \widetilde{C} \: (\ell(n))^{md(k-1)}\sum_{j=0}^{k-1}\|y\|_E^j$ are of polynomial growth in $n$.
\end{proof}

\vskip 2mm

\subsection{A specific case}  In the case of $\varsigma_1=...=\varsigma_{k-1}=0$, the division inequality on $K$ is not necessary to construct meshes on $E$. We just take admissible meshes on $K$ and lift them on the algebraic varieties $V(s)$. We will see that in this fashion we can obtain optimal polynomial meshes. It is worth recalling that for an algebraic hypersurface in $\C^{N}$ we have \ dim$\, \pol_n(V)=\mathcal{O}(n^{N-1})$ \ and for an algebraic set $V$ of codimension $m$, we have \ dim$\, \pol_n(V)=\mathcal{O}(n^{N-m})$. 

\begin{thm} \label{yk=s0}
	Let $V=V(s)\subset \C^{N+1}$ be an algebraic variety defined by a polynomial of the form \ $s(z,y)=y^k-\varsigma_0(z)$ \ and \ $d=\de s>0$. \ If \ $K\subset \C^N$ \ is a compact set with norming sets $\{\aol_n\}_n$ and \ $E$ is given by (\ref{E jako podniesienie K}) \ then the finite sets 
	\be \label{B_n} \bol _n= \{(z,y)\in V\: : \: z\in \aol_{\ell} \} \ \ \ \ \mbox{where} \ \ \ell=\ell(n)=\min\left\{\!\!\left\lceil\!\frac{d^2n}k\!\right\rceil\!\!,\, dn+(k-1)(d-k)\!\!\right\}, \ \ \  n\in \N,
	\ee
	are norming for $E$. \  Moreover, if $C_n$ is a constant for $\aol_n$ on $K$ in inequality (\ref{mesh}), then \ $k\ C_{\ell(n)}^{1/k}$ is a~constant for $\bol_n$ on $E$ \ and \ $\ca \bol_n=k\: \ca \aol_{\ell(n)}$. \ In particular, for $d=k$ we have $\ell(n)=dn$. 
\end{thm}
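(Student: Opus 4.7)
The plan is to reduce $p$ modulo $s$ to a canonical representative in $W=\pol(z)\otimes\pol_{k-1}(y)$, recover its $y$-coefficients pointwise by discrete Fourier inversion over the $k$-th roots of $\varsigma_0(z)$, and then transfer the resulting scalar inequality from $K$ down to $\aol_\ell$ using the assumed norming property.

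First I would take $p\in\pol_n(\C^{N+1})$ and write $p|_V=q|_V$ for a unique $q(z,y)=\sum_{j=0}^{k-1}q_j(z)\,y^j\in W$; Theorem \ref{deg ze starej pracy} then gives $\de q\le (d/k)\,\de_V p\le dn/k$, so in particular $\de q_j\le dn/k-j$. Fix $z\in K$, pick $y_0\in\C$ with $y_0^k=\varsigma_0(z)$, and let $\zeta=e^{2\pi i/k}$. The $k$ preimages of $z$ in $V$ are $(z,y_0\zeta^i)$ for $i=0,\dots,k-1$, and the Vandermonde identities $q(z,y_0\zeta^i)=\sum_j(q_j(z)y_0^j)\,\zeta^{ij}$ invert to
\[
q_j(z)\,y_0^j=\frac1k\sum_{i=0}^{k-1} q(z,y_0\zeta^i)\,\zeta^{-ij},
\]
so that $|q_j(z)|^k\,|\varsigma_0(z)|^j\le \max_i|q(z,y_0\zeta^i)|^k$. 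The crucial observation is that $Q_j:=q_j^k\,\varsigma_0^j$ is a genuine polynomial in $z$ whose degree admits the two estimates $k\de q_j+j\de\varsigma_0\le dn+j(d-k)\le dn+(k-1)(d-k)$ and, more crudely, $\le \lceil d^2 n/k\rceil$, so that $Q_j\in\pol_\ell(\C^N)$ with $\ell=\ell(n)$ as in the statement.

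Next, invoke the norming hypothesis on $K$: $\|Q_j\|_K\le C_\ell\|Q_j\|_{\aol_\ell}$. Since by construction the whole fiber above each $z\in\aol_\ell$ lies in $\bol_n$, the previous pointwise inequality yields $\|Q_j\|_{\aol_\ell}\le \|q\|_{\bol_n}^k$, and extracting $k$-th roots gives $|q_j(z)|\,|\varsigma_0(z)|^{j/k}\le C_\ell^{1/k}\|q\|_{\bol_n}$ for every $z\in K$ (the degenerate case $\varsigma_0(z)=0$ is trivial, since both sides vanish for $j\ge 1$). Because $|y|^j=|\varsigma_0(z)|^{j/k}$ for $(z,y)\in E$, the triangle inequality gives
\[
|q(z,y)|\le \sum_{j=0}^{k-1}|q_j(z)|\,|\varsigma_0(z)|^{j/k}\le k\,C_\ell^{1/k}\,\|q\|_{\bol_n},
\]
and therefore $\|p\|_E=\|q\|_E\le k\,C_\ell^{1/k}\,\|p\|_{\bol_n}$. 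The cardinality identity $\ca\bol_n=k\,\ca\aol_\ell$ is then immediate, the fiber over each $z\in\aol_\ell$ having exactly $k$ points (counted with multiplicity on the branch locus $\{\varsigma_0=0\}$); when $d=k$, tightness in Theorem \ref{deg ze starej pracy} collapses the degree estimate to $\ell(n)=dn$.

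The main obstacle is negotiating the fractional exponent $|\varsigma_0|^{j/k}$: it cannot be attacked directly by a polynomial norming inequality, but raising the DFT bound to the $k$-th power promotes $|q_j(z)y^j|^k$ to the honest polynomial $Q_j=q_j^k\varsigma_0^j$ on which the hypothesis on $\aol_\ell$ can be applied. The price of this trick is twofold, and both items appear in the statement: the norming constant enters as $C_\ell^{1/k}$ (together with the factor $k$ from summing the $k-1$ coefficients), and the required degree $\ell$ inflates by the factor $d/k$ coming from Theorem \ref{deg ze starej pracy}, producing the two competing expressions defining $\ell(n)$.
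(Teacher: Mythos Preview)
Your proof is correct and follows essentially the same approach as the paper's: reduce to a representative $q\in W$ via Theorem~\ref{deg ze starej pracy}, pass to the polynomials $Q_j=q_j^k\varsigma_0^j$ (whose degree is bounded by $\ell(n)$ via the same two estimates), apply the norming inequality on $K$, and control $\|Q_j\|_{\aol_\ell}$ by $\|q\|_{\bol_n}^k$ through the orthogonality of $k$-th roots of unity. The only cosmetic difference is that you invoke this orthogonality as an explicit discrete Fourier inversion, whereas the paper phrases the same identity via Newton's identities and Vi\`ete's formulas; the resulting constant $k\,C_{\ell(n)}^{1/k}$ and the degree count for $\ell(n)$ are identical.
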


The case $\de \varsigma_0=0$ is trivial:  $\varsigma_0=c$, $E=K\times \{\sqrt[k]{c}\}$ and for any polynomial $p\in\pol_n(z,y)$ 
\[ \|p\|_E = \|p(z,\sqrt[k]{c})\|_K\le C_n \|p(z,\sqrt[k]{c})\|_{\mathcal{A}_n} = C_n \|p\|_{\bol_n} \ \mbox{ where }\ \bol_n=\aol_n\times \{\sqrt[k]{c}\}.\]

\begin{proof}
Fix an arbitrary polynomial \ $p\in\pol_n(z,y)$ \ and consider \ $q\in\pol(z)\otimes \pol_{k-1}(y)$ \ such that $q\equiv p$ on $V$. By Theorem \ref{deg ze starej pracy}, $m:=\de q \le \frac{dn}k$. Write \ $q(z,y)=\sum_{j=0}^{k-1} q_j(z)\, y^j$ \ and for some $(z,y)\in E$ we have
	\be \label{t1} \|p\|_E = \|q\|_E = |q(z,y)| \le \sum_{j=0}^{k-1} |q_j^k(z)\, y^{jk}|^{1/k} = \sum_{j=0}^{k-1} |q_j(z)^k\, \varsigma_0(z)^j|^{1/k} \le \sum_{j=0}^{k-1} \|q_j^k\, \varsigma_0^j\|_K^{1/k}. \ee
	Consider $j$ such that $\|q_j^k\, \varsigma_0^j\|_K\not=0$. Observe that 
	\[ \de (q_j^k \varsigma_0^j) = k \: \de q_j + j \:\de \varsigma_0\le k(m-j) + j\:\de \varsigma_0 \le d(m-j)+jd =dm\le \frac{d^2n}k\] and 
	\[ \de (q_j^k \varsigma_0^j) \le k(m-j) + j\:\de \varsigma_0 = km+j(\de \varsigma_0-k)\le dn+(k-1)(d-k).	\]
	In both estimates, \ $ \de (q_j^k \varsigma_0^j) \le \ell(n)$ \ and since \ $\aol_{\ell(n)}$ is a norming set on $K$, we get
	\be \label{t2} \|q_j^k\, \varsigma_0^j\|_K \le C_{\ell(n)} \| q_j^k\, \varsigma_0^j \|_{\aol_{\ell(n)}} = C_{\ell(n)} \| q_j\, \varsigma_0^{j/k} \|_{\aol_{\ell(n)}}^k.\ee	
	For a fixed $z\in \aol_{\ell(n)}$ where $\| q_j^k\, \varsigma_0^j \|_{\aol_{\ell(n)}}$ is attained, take $\{y_1,...,y_k\}=\varsigma_0(z)^{1/k}\not=\{0\}$. Every $y_i$ is equal to $|\varsigma_0(z)|^{1/k}u_i$ where $\{u_1,...,u_k\}=\sqrt[k]1$. By Newton's identities and Vi\`ete's formulas, we have
	\[ \sum_{m=1}^k u_m ^\nu =0 \ \ \ \ \mbox{for } \ \nu=1,..., k-1 . \]
Consequently, 
\begin{align*}
k \: \| q_j\, \varsigma_0^{j/k} \|_{\aol_{\ell(n)}} &= k\: \left| q_j(z)\, \varsigma_0(z)^{j/k}\right|=   \left|q_j(z)\, |\varsigma_0(z)|^{j/k} \sum_{m=1}^k u_m ^k \right|\\ &= \left|\sum_{i=0}^{k-1} q_i(z) \, |\varsigma_0(z)|^{i/k} \sum_{m=1}^k u_m ^{k-j+i} \right|= \left|\sum_{m=1}^k u_m^{k-j} \sum_{i=0}^{k-1} q_i(z) \, |\varsigma_0(z)|^{i/k}  u_m ^{i} \right|\\ &\le \sum_{m=1}^k \left| \sum_{i=0}^{k-1} q_i(z) \, |\varsigma_0(z)|^{i/k}  u_m ^{i} \right|= \sum_{m=1}^k \left| \sum_{i=0}^{k-1} q_i(z) \, y_m ^{i} \right| \le \sum_{m=1}^{k} \|q\|_{\bol_n}\\ &= k\: \|q\|_{\bol_n} = k \: \|p\|_{\bol_n}.
\end{align*}
	From this and inequalities (\ref{t1}), (\ref{t2}) we obtain
	\[ \|p\|_E \le \sum_{j=0}^{k-1} \|q_j^k\, \varsigma_0^j\|_K^{1/k} \le C_{\ell(n)}^{1/k} \sum_{j=0}^{k-1} \| q_j\, \varsigma_0^{j/k} \|_{\aol_{\ell(n)}} \le C_{\ell(n)}^{1/k} \sum_{j=0}^{k-1} \|p\|_{\bol_n} = k \ C_{\ell(n)}^{1/k} \: \|p\|_{\bol_n}\]
	and the proof is complete.
\end{proof}

\vskip 2mm

\begin{corol} \label{CorOptAdm1}
	Under the assumptions and notations of Theorem \ref{yk=s0}, we have: 
	\begin{itemize}
		\item if $\{\aol_n\}_n$ are weakly admissible meshes on  $K$  then so are  $\{\bol_n\}_n$ on $E$,
		\item if $\{\aol_n\}_n$ are admissible meshes on  $K$  then so are  $\{\bol_n\}_n$ on $E$,
		\item if $\{\aol_n\}_n$ are optimal polynomial meshes on  $K$  then so are  $\{\bol_n\}_n$ on $E$.
	\end{itemize} 
	\end{corol}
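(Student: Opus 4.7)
The plan is to derive all three implications directly from Theorem \ref{yk=s0}, which supplies a norming constant $D_n = k C_{\ell(n)}^{1/k}$ and cardinality $\ca \bol_n = k \cdot \ca \aol_{\ell(n)}$. The crucial preliminary observation is that $\ell(n)$ depends linearly on $n$: when $d=k$ the formula collapses to $\ell(n)=dn$, while for $d>k$ the inequality $d^2/k > d$ forces the minimum to be attained at $\ell(n) = dn + (k-1)(d-k)$. In both regimes $\ell(n) = \Theta(n)$, so polynomial (respectively bounded) growth in the variable $\ell$ is equivalent to polynomial (respectively bounded) growth in $n$.

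With this observation in hand, the first two bullet points are essentially automatic. For the WAM case, polynomial growth of $\{C_n\}$ and $\{\ca \aol_n\}$ transfers through the substitution $n \mapsto \ell(n)$ and the operations $x \mapsto k x^{1/k}$ and $x \mapsto kx$ to polynomial growth of $\{D_n\}$ and $\{\ca \bol_n\}$. For the AM case, the extra hypothesis $\sup_n C_n < \infty$ yields $\sup_n D_n \le k (\sup_n C_n)^{1/k} < \infty$, while the cardinality bound is preserved by the same linearity argument.

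The OPM case is the one I expect to require an extra ingredient, and so is the main obstacle: optimality demands a cardinality bound phrased in terms of $\dim \pol_n(E)$, whereas Theorem \ref{yk=s0} only provides one in terms of $\ca \aol_{\ell(n)} = O(\dim \pol_{\ell(n)}(K))$. To bridge this gap, I would first use the linearity of $\ell$ together with the polynomial growth of $\dim \pol_n(K)$ (which equals $\binom{N+n}{n} = \Theta(n^N)$ in the standard polynomially determining setting) to conclude $\dim \pol_{\ell(n)}(K) = O(\dim \pol_n(K))$. Next, observing that the projection $E \to K$, $(z,y) \mapsto z$, is surjective by the very construction of $E$, the assignment $p(z) \mapsto p(z) \in \pol_n(\C^{N+1})$ followed by restriction to $E$ descends to a well-defined linear map $\pol_n(K) \to \pol_n(E)$ which is injective: if $p$ vanishes on $E$ then $p(z) = 0$ for every $z$ in the image $K$ of the projection, so $p \in I(K)$. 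Hence $\dim \pol_n(E) \ge \dim \pol_n(K)$, and combining the two estimates gives $\ca \bol_n = O(\dim \pol_n(E))$. Together with the already established boundedness of $D_n$, this yields the OPM property on $E$ and completes the proof.
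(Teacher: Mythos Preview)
The paper states Corollary~\ref{CorOptAdm1} without proof, treating it as an immediate consequence of Theorem~\ref{yk=s0}; your argument correctly supplies the natural details and is in line with what the paper intends.

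Two minor remarks. First, for $d>k$ the second branch $dn+(k-1)(d-k)$ need not realize the minimum for \emph{every} $n$ (e.g.\ $d=10$, $k=9$, $n=1$ gives $\lceil 100/9\rceil=12<18$), but since both branches are affine in $n$ your conclusion $\ell(n)=\Theta(n)$ is unaffected; indeed one always has $dn\le \ell(n)\le dn+(k-1)(d-k)$. Second, your OPM step uses $\dim\pol_n(K)=\Theta(n^N)$, i.e.\ that $K$ is polynomially determining. This is the standard setting the paper has in mind (cf.\ the discussion in the Introduction), so the argument is fine in context; if one wanted full generality, the same step goes through because $\dim\pol_n(K)$ agrees with the Hilbert function of the Zariski closure of $K$ and is therefore eventually a polynomial in $n$, whence $\dim\pol_{\ell(n)}(K)=O(\dim\pol_n(K))$ for any linear $\ell$.
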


\vskip 2mm

Regarding the converses, we can see that if $\{\bol_n\}_n$ are norming sets on $E\subset V(s)$, \ $\pi$ is the projection 
\begin{equation}\label{projection}
V(s)\ni(z,y)\mapsto z\in\C^N
\end{equation}
 and $K=\pi(E)$, then for any $p\in\pol_n(z)$ we have
\[ \|p\|_K = \|p\|_E \le C_n\: \|p\|_{\bol_n}= C_n\: \|p\|_{\pi(\bol_n)}. \] 

\vskip 2mm 

\begin{corol} \label{CorOptAdm2}
	Under the assumptions of Theorem \ref{yk=s0},  
	\begin{itemize}
		\item $E\subset V(s)$ has WAM if and only if 
		$K=\pi(E)$ has WAM,
		\item $E\subset V(s)$ has AM if and only if 
		$K=\pi(E)$ has AM,
		\item $E\subset V(s)$ has OPM if and only if 
		$K=\pi(E)$ has OPM.
	\end{itemize}
Moreover, meshes $\{\bol_n\}_n\subset E$ and $\{\aol_n\}_n\subset K$ are related by (\ref{B_n}) or $\pi(\bol_n)=\aol_n$.   
\end{corol}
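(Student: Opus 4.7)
The plan is to dispatch the two implications separately, each by invoking material already in the excerpt.

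For the ``if'' direction, I will simply invoke Corollary~\ref{CorOptAdm1}: its three bullets already produce meshes of the correct type on $E$ from meshes on $K$ via the lifting $\bol_n = \{(z,y) \in V : z \in \aol_{\ell(n)}\}$ supplied by Theorem~\ref{yk=s0}. So the forward implication in each of the three cases requires no additional work.

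For the ``only if'' direction, I will set $\aol_n := \pi(\bol_n)$ and read off the necessary inequality from the computation displayed immediately before the corollary: since any $p \in \pol_n(z)$ may be regarded as a $y$-independent element of $\pol_n(\C^{N+1})$,
\[
\|p\|_K \;=\; \|p\|_E \;\le\; C_n \|p\|_{\bol_n} \;=\; C_n \|p\|_{\pi(\bol_n)}.
\]
Hence $\{\aol_n\}_n$ is norming on $K$ with the same constants $C_n$ as $\{\bol_n\}_n$ on $E$, and obviously $\ca \aol_n \le \ca \bol_n$. In the WAM and AM cases this is already enough: polynomial growth (respectively, uniform boundedness) of $C_n$ together with polynomial growth of $\ca \bol_n$ transfer verbatim to $C_n$ and $\ca \aol_n$.

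The one point requiring a bit more care is the OPM case, where the target bound is $\ca \aol_n = \mathcal{O}(\dim \pol_n(K))$ rather than only $\mathcal{O}(\dim \pol_n(E))$. My plan is to compare the two dimensions directly. Using the representation $\pol(V)_{|V} = W_{|V}$ from Remark~\ref{dim for hyperalg} with $W = \pol(z) \otimes \pol_{k-1}(y)$, together with the observation that for $z \in K$ with $\varsigma_0(z) \ne 0$ the fiber $\pi^{-1}(z) \cap E$ consists of $k$ distinct $k$-th roots of $\varsigma_0(z)$, a Vandermonde argument forces the coefficients $q_j$ of any $q = \sum_{j=0}^{k-1} q_j(z) y^j \in W_n$ with $q|_E \equiv 0$ to vanish on $K$ (assuming the mild nondegeneracy $\varsigma_0 \not\equiv 0$ on $K$). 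This identifies $\pol_n(E)$ with $\bigoplus_{j=0}^{k-1} \pol_{n-j}(K)$ via $\sum q_j(z) y^j \mapsto (q_0, \ldots, q_{k-1})$, giving $\dim \pol_n(E) \le k \cdot \dim \pol_n(K)$. Combined with $\ca \aol_n \le \ca \bol_n = \mathcal{O}(\dim \pol_n(E))$, this yields the OPM condition on $K$. The dimension comparison is the only non-bookkeeping step, and I expect it to be the main (still rather mild) obstacle.
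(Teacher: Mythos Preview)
Your approach matches the paper's exactly: the forward direction is Corollary~\ref{CorOptAdm1}, and the converse is precisely the displayed projection computation that immediately precedes the statement. The paper does not spell out the comparison $\dim\pol_n(E)=\mathcal{O}(\dim\pol_n(K))$ needed for the OPM converse, and your Vandermonde argument correctly fills this gap (one minor point: when $d>k$ the $W$-representative of an element of $\pol_n(E)$ lies in $W_{\lceil dn/k\rceil}$ rather than $W_n$ by Theorem~\ref{deg ze starej pracy}, but this only alters the bound by the fixed factor $(d/k)^N$ and is harmless for the big-$\mathcal{O}$ conclusion).
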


\vskip 2mm

\subsection{On some algebraic sets of codimension greater than one} 
As before, we consider an algebraic variety $V=V(s_1,s_2)\subset\C^{N+2}$ defined by polynomials $s_1$ and $s_2$ of the form (\ref{s1yk}) and (\ref{s2yk}). Let $K$ be a compact subset of $\C^N$ and 
\begin{equation}\label{Ecodim2}
E=\{(z,y_1,y_2)\in V(s_1,s_2)\: :\: z\in K\}.
\end{equation}
We will construct meshes for $E$ using norming sets in $K$. We first assume that $\tilde{\varsigma}_1=\ldots=\tilde{\varsigma}_{k_1-1}=0$ and $\tilde{\tilde{\varsigma}}_1=\ldots=\tilde{\tilde{\varsigma}}_{k_2-1}=0$. Then 
both polynomials $s_1$ and $s_2$ have a specific form considered in the previous section and we can use Theorem \ref{yk=s0}. 

\begin{thm} \label{TwiceSpecific}
Let $V=V(s_1,s_2)\subset \C^{N+2}$ be an algebraic variety defined by polynomials of the form \ $s_1(z,y_1)=y_1^{k_1}-\tilde{\varsigma}_0(z)$, $\de \tilde{\varsigma}_0> 0$ \ and \ $s_2(z,y_1,y_2)=y_2^{k_2}-\tilde{\tilde{\varsigma}}_0(z,y_1)$, $\de \tilde{\tilde{\varsigma}}_0> 0$ with $d_1=\de s_1$, $d_2=\de s_2$. \ If \ $K\subset \C^N$ \ is a compact set with norming sets $\{\aol_n\}_n$, $C_n$ is a constant for $\aol_n$ on $K$ in inequality (\ref{mesh}) and \ $E$ is given by (\ref{Ecodim2}) \ then the finite sets 
	\be \label{B_ncodim2} \bol _n= \{(z,y_1,y_2)\in V\: : \: z\in \aol_{\ell(n)} \}, \ \ \ n\in\N \ee 
where
\[ \ell(n)=d_1d_2n+d_1d_2k_2+d_1k_1 \]
are norming for $E$ with constants $k_2k_1^{1/k_2}\ C_{\ell(n)}^{1/(k_1k_2)}$ and $\ca \bol_n=k_1k_2\: \ca \aol_{\ell(n)}$. Moreover, for $d_1=k_1$ and $d_2=k_2$ we have $\ell(n)=k_1k_2n$. 
\end{thm}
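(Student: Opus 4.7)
The plan is to iterate Theorem~\ref{yk=s0} twice, once for each of the special-form polynomials $s_1,s_2$, treating the first ``lift'' as a fresh base compact in a one-larger ambient space. Since $s_1$ and $s_2$ individually fall within the scope of Theorem~\ref{yk=s0}, no new degree-type estimate is needed; everything reduces to bookkeeping of the cardinality, constants, and degrees across the two applications.

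First, I would apply Theorem~\ref{yk=s0} with base compact $K\subset\C^N$, norming sets $\{\aol_n\}_n$ and polynomial $s_1(z,y_1)=y_1^{k_1}-\tilde{\varsigma}_0(z)$. This produces norming sets
\[
\bol^{(1)}_m=\{(z,y_1)\in V(s_1)\: :\: z\in\aol_{\ell_1(m)}\}
\]
on the intermediate compact $K_1:=\{(z,y_1)\in V(s_1)\: :\: z\in K\}\subset\C^{N+1}$, with constant $\widehat{C}_m=k_1\,C_{\ell_1(m)}^{1/k_1}$, cardinality $k_1\,\ca\aol_{\ell_1(m)}$, and $\ell_1(m)\le d_1 m+(k_1-1)(d_1-k_1)$. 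The key remark is that the resulting norming inequality holds for every polynomial in $\pol_m(\C^{N+1})$, so $\{\bol^{(1)}_m\}_m$ is a bona fide family of norming sets for $K_1$ as a compact subset of $\C^{N+1}$, not merely an inequality on $V(s_1)$.

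Next, I would apply Theorem~\ref{yk=s0} a second time, now with ``$N$'' replaced by $N+1$, base compact $K_1\subset\C^{N+1}$, its norming sets $\{\bol^{(1)}_m\}_m$ with constants $\widehat{C}_m$, and polynomial $s_2(z,y_1,y_2)=y_2^{k_2}-\tilde{\tilde{\varsigma}}_0(z,y_1)$. Since $K_1\subset V(s_1)$, the lift produced by this second application is
\[
\{(z,y_1,y_2)\in V(s_2)\: :\: (z,y_1)\in K_1\}=\{(z,y_1,y_2)\in V(s_1,s_2)\: :\: z\in K\}=E,
\]
so the theorem yields norming sets for $E$ of the shape
\[
\bol_n=\{(z,y_1,y_2)\in V(s_2)\: :\: (z,y_1)\in\bol^{(1)}_{\ell_2(n)}\}=\{(z,y_1,y_2)\in V(s_1,s_2)\: :\: z\in\aol_{\ell_1(\ell_2(n))}\},
\]
with constant $k_2\widehat{C}_{\ell_2(n)}^{1/k_2}=k_2 k_1^{1/k_2}\,C_{\ell_1(\ell_2(n))}^{1/(k_1 k_2)}$ and cardinality $k_2\cdot k_1\,\ca\aol_{\ell_1(\ell_2(n))}=k_1 k_2\,\ca\aol_{\ell(n)}$.

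The last step is purely arithmetic: bound $\ell(n):=\ell_1(\ell_2(n))$. Using twice the trivial inequality $(k_j-1)(d_j-k_j)\le k_j d_j$, one obtains $\ell_2(n)\le d_2(n+k_2)$ and then $\ell_1(\ell_2(n))\le d_1\,\ell_2(n)+k_1 d_1\le d_1 d_2 n+d_1 d_2 k_2+d_1 k_1$, which is the announced bound. In the special case $d_1=k_1$, $d_2=k_2$, the additive corrections vanish and Theorem~\ref{yk=s0} gives $\ell_j(m)=d_j m$ exactly, so $\ell(n)=d_1 d_2 n=k_1 k_2 n$. The only mildly subtle point is the justification that the first application produces a norming family for $K_1$ viewed as a compact in the ambient $\C^{N+1}$; once that is granted, the rest of the argument is a clean iteration.
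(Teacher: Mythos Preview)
Your proposal is correct and follows essentially the same route as the paper: both apply Theorem~\ref{yk=s0} first to $(K,s_1)$ to obtain norming sets on the intermediate compact $K_1=F=\{(z,y_1)\in V(s_1):z\in K\}$, then apply it again to $(K_1,s_2)$ to reach $E$, and finish by bounding the composed index $\ell_1(\ell_2(n))$ via $(k_j-1)(d_j-k_j)<k_jd_j$ to get $\ell(n)\le d_1d_2n+d_1d_2k_2+d_1k_1$. The constants and cardinalities you record match the paper's exactly.
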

\begin{proof} Let $V$, $E$ and $K$ satisfy the assumptions. We define $F:=\{(z,y_1)\in V(s_1)\: :\: z\in K\}$ and $V(s_1)=\{(z,y_1)\in \C^N\times \C \: : \:s_1(z,y_1)=0\}$. The set $F$ is a compact subset of $\C^{N+1}$ and from Theorem \ref{yk=s0} the sets 
\[ \fol _n= \{(z,y_1)\in V(s_1)\: : \: z\in \aol_{j(n)} \}, \ \ \ n\in \N \]
are norming sets for $F$ with constants $\tilde{C_n}=k_1 C_{j(n)}^{1/k_1}$ where \[j(n)=d_1n+\min\left\{\!\!\left\lceil\!\frac{d_1(d_1-k_1)n}{k_1}\!\right\rceil\!\!,\,(k_1-1)(d_1-k_1)\!\!\right\}<d_1(n+k_1).\]

Additionally, $E=\{(z,y_1,y_2)\in V(s_2)\: :\: (z,y_1)\in F\}$ where $V(s_2)=\{(z,y_1,y_2)\in \C^{N+2}\: : \:s_2(z,y_1,y_2)=0\}$.
By Theorem \ref{yk=s0} again, 
\[
\bol _n= \{(z,y_1,y_2)\in V(s_2)\: : \: (z,y_1)\in \fol_{t(n)} \}, \ \ \ n\in\N\]
are norming sets for $E$ where $t(n)=d_2n+\min\left\{\!\!\left\lceil\!\frac{d_2(d_2-k_2)n}{k_2}\!\right\rceil\!\!,\,(k_2-1)(d_2-k_2)\!\!\right\}<d_2(n+k_2)$.
Finally, we obtain
\[
\bol _n= \{(z,y_1,y_2)\in V(s_1,s_2)\: : \: z\in \aol_{j(t(n))} \}.
\]
\end{proof}

\begin{corol} \label{CorOptAdmCodim}
	Under the assumptions and notations of Theorem \ref{TwiceSpecific}, we have: 
	\begin{itemize}
		\item if $\{\aol_n\}_n$ are weakly admissible meshes on  $K$  then so are  $\{\bol_n\}_n$ on $E$,
		\item if $\{\aol_n\}_n$ are admissible meshes on  $K$  then so are  $\{\bol_n\}_n$ on $E$,
		\item if $\{\aol_n\}_n$ are optimal polynomial meshes on  $K$  then so are  $\{\bol_n\}_n$ on $E$.
	\end{itemize} 
	\end{corol}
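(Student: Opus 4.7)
The plan is to read Corollary \ref{CorOptAdmCodim} as a direct consequence of Theorem \ref{TwiceSpecific}: the theorem supplies both the norming inequality $\|p\|_E \le D_n \|p\|_{\bol_n}$ with explicit constant $D_n := k_2 k_1^{1/k_2}\, C_{\ell(n)}^{1/(k_1 k_2)}$ and the cardinality $\ca\bol_n = k_1 k_2\, \ca\aol_{\ell(n)}$, where $\ell(n) = d_1 d_2\, n + d_1 d_2 k_2 + d_1 k_1$. The key observation is that $\ell(n)$ is affine in $n$, so polynomial growth and uniform boundedness are both preserved under the substitution $n \mapsto \ell(n)$. Each of the three bullets then follows by specialising the hypothesis on $\{C_n\}$ and $\{\ca\aol_n\}$.

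For the WAM bullet, polynomial growth of $C_n$ in $n$ yields polynomial growth of $C_{\ell(n)}^{1/(k_1 k_2)}$ in $\ell(n)$ (raising to a fixed positive power preserves polynomial majorisation), hence in $n$, and polynomial growth of $\ca\aol_n$ similarly gives polynomial growth of $\ca\bol_n$; the fixed multiplicative constants $k_2 k_1^{1/k_2}$ and $k_1 k_2$ do not affect either count. For the AM bullet, if $C_n \le C$ independently of $n$, then $D_n \le k_2 k_1^{1/k_2}\, C^{1/(k_1 k_2)}$ is likewise bounded, while $\ca\bol_n$ remains polynomial in $n$ by the WAM argument.

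The OPM bullet requires the additional verification $\ca\bol_n = \mathcal{O}(\dim \pol_n(V))$. For a polynomially determining $K\subset \C^N$ one has $\dim \pol_n(K) = \binom{N+n}{N} = \Theta(n^N)$; for the codimension-two variety $V\subset \C^{N+2}$ defined by the specific $s_1, s_2$ of Theorem \ref{TwiceSpecific}, Remark \ref{dim for alg} yields $\dim \pol_n(V) = \Theta(n^N)$ with leading coefficient depending only on $k_1$ and $k_2$. Consequently
$\ca\bol_n = k_1 k_2\, \ca\aol_{\ell(n)} = \mathcal{O}(\ell(n)^N) = \mathcal{O}(n^N) = \mathcal{O}(\dim \pol_n(V))$,
which combined with the boundedness of $D_n$ from the AM case gives OPM on $E$. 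The main (and only nontrivial) obstacle is this dimension comparison; the WAM and AM cases amount to bookkeeping once Theorem \ref{TwiceSpecific} is in hand.
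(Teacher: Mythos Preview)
Your argument is essentially what the paper intends: Corollary~\ref{CorOptAdmCodim} is stated without proof as an immediate consequence of Theorem~\ref{TwiceSpecific}, and your reading of it---transfer the explicit constant $D_n=k_2k_1^{1/k_2}C_{\ell(n)}^{1/(k_1k_2)}$ and the cardinality $\ca\bol_n=k_1k_2\,\ca\aol_{\ell(n)}$ through the affine substitution $n\mapsto\ell(n)$---is exactly right for the WAM and AM bullets.

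For the OPM bullet there is a small but genuine gap. First, you add the hypothesis that $K$ be polynomially determining, which is not among the assumptions of Theorem~\ref{TwiceSpecific}. Second, you conclude $\ca\bol_n=\mathcal{O}(\dim\pol_n(V))$, whereas the OPM condition demands $\mathcal{O}(\dim\pol_n(E))$; since $E\subsetneq V$ one only has $\dim\pol_n(E)\le\dim\pol_n(V)$, so the inequality runs the wrong way and the step needs justification (it is in fact an equality when $K$ is polynomially determining, but you do not argue this). Both issues are removed by a more direct chain that avoids $V$ altogether: the surjective projection $\pi:E\to K$ gives an injection $\pol_n(K)\hookrightarrow\pol_n(E)$ via $p\mapsto p\circ\pi$, so $\dim\pol_n(K)\le\dim\pol_n(E)$; and since $n\mapsto\dim\pol_n(K)$ is eventually a polynomial (the Hilbert function of the Zariski closure of $K$) while $\ell(n)$ is affine in $n$, one has $\dim\pol_{\ell(n)}(K)=\mathcal{O}(\dim\pol_n(K))$. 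Hence
\[
\ca\bol_n=k_1k_2\,\ca\aol_{\ell(n)}=\mathcal{O}\bigl(\dim\pol_{\ell(n)}(K)\bigr)=\mathcal{O}\bigl(\dim\pol_n(K)\bigr)=\mathcal{O}\bigl(\dim\pol_n(E)\bigr),
\]
which yields OPM on $E$ for arbitrary compact $K$.
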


We now turn back to an algebraic variety  $V=V(s_1,s_2)\subset\C^{N+2}$ defined by polynomials $s_1$ and $s_2$ of the form (\ref{s1yk}) and (\ref{s2yk}). On a compact set $E$ given by (\ref{Ecodim2}) we can get weakly admissible mesh. We start with the observation concerning the division inequality. We need
a property that was proved in \cite{BCCK2}, 

\begin{proposition} \label{war_geom}
Let $V(s)\subset\mathbb{C}^{N+1}$ be an algebraic variety defined by a polynomial $s$ in the form $s(z,y) = y^k-\sum\limits_{j=0}^{k-1}s_j(z)\,y^j$ with $k\ge 1$, $K\subset\mathbb{C}^N$ be a compact set and  $E:=\pi^{-1}(K)\subset V(s)$ where $\pi$ denotes the projection (\ref{projection}).
If $K$ satisfies the division inequality with exponent $m$ and \ $K\setminus\mathcal{A}(s)\neq\emptyset$ \ then 
\begin{equation} \label{skladniki}
\|[p_0,...,p_{k-1}]\|_K  \le \ M_0\, (\deg p)^{m_0} \|p\|_{E} \end{equation} 
for any polynomial $p$ written in the form \ $p(z,y)=\sum\limits_{j=0}^{k-1} p_j(z)\, y^j$ on $V(s)$ where $M_0,\, m_0\ge 0$ are constants independent of $p_0,\ldots,p_{k-1}$  and \ $m_0=m\,(k-1)\: \deg s$.
\end{proposition}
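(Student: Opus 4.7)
The proposition is essentially a combination of the two ingredients already established in the paper: Lemma \ref{Lemat za Proposition}, which converts sup-norm information about $p$ on $E$ into information about the coefficients $p_j$ multiplied by a specific auxiliary polynomial $\widetilde{q}$, and the division inequality hypothesis on $K$, which allows one to strip off that multiplicative factor at the cost of a polynomial factor in degrees. The plan is therefore to chain these two inequalities.

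First, I would apply Lemma \ref{Lemat za Proposition} with $F = K$, $G = E$. This produces a polynomial $\widetilde{q}\in\pol(z)$ with $\deg\widetilde{q}\le 2d(k-1)$ and $\{\widetilde{q}=0\}\subset\aol(s)$, together with the bound
\[
\|p_j^2\,\widetilde{q}\|_K \;\le\; \widetilde{C}^{\,2}\,\|p\|_E^{2}\qquad(j=0,\ldots,k-1).
\]
Because $K\setminus\aol(s)\neq\emptyset$, $\widetilde{q}\not\equiv 0$ on $K$, so the division inequality (\ref{division}) applied to the polynomial $p_j^2\in\pol(\C^N)$ with the choice $q=\widetilde{q}$ yields
\[
\|p_j^{2}\|_K \;\le\; M\,\bigl(\deg p_j^{2}+\deg\widetilde{q}\bigr)^{m\,\deg\widetilde{q}}\,\|p_j^{2}\,\widetilde{q}\|_K.
\]

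Combining the two inequalities, using $\|p_j^{2}\|_K=\|p_j\|_K^{2}$, taking square roots, and inserting the bounds $\deg p_j\le \deg p$ and $\deg\widetilde{q}\le 2d(k-1)$, I obtain
\[
\|p_j\|_K \;\le\; \sqrt{M}\,\widetilde{C}\,\bigl(2\deg p+2d(k-1)\bigr)^{m\,d\,(k-1)}\,\|p\|_E.
\]
Finally, since $\|[p_0,\ldots,p_{k-1}]\|_K$ is controlled by $\max_j\|p_j\|_K$ (or, with an extra factor of $k$, by the sum), one absorbs the additive constants inside the parentheses into a single multiplicative constant $M_0$ (valid for $\deg p$ bounded below, and trivially extended to all $\deg p$ by enlarging $M_0$), arriving at the claimed estimate with exponent $m_0=m(k-1)\deg s$.

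There is no real obstacle here, since both of the nontrivial facts are already in place: the algebraic/Vandermonde analysis hiding inside Lemma \ref{Lemat za Proposition} takes care of the passage from $\|p\|_E$ to the coefficient-wise information, while the division inequality is invoked as a black box. The only mildly delicate point is bookkeeping of the exponent: one must notice that $\deg p_j^{2}=2\deg p_j$ and that $\deg \widetilde q$ is $O(d(k-1))$, so that the product $m\,\deg\widetilde q$ appearing in the division inequality reproduces exactly the stated $m_0=m(k-1)\deg s$ after taking the square root. Once this arithmetic is recognized, the proof is essentially a one-line chaining.
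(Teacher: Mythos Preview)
The paper does not actually supply its own proof of this proposition; it is quoted verbatim as a result established in \cite{BCCK2} (see the sentence immediately preceding the statement). So there is no in-paper argument to compare against.

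That said, your proof is correct and is exactly the argument one would reconstruct from the tools the paper makes available. Indeed, it mirrors step for step what the paper does in the proof of Theorem~\ref{ThmAdmMesh}: invoke Lemma~\ref{Lemat za Proposition} with $F=K$, $G=E$ to get $\|p_j^{2}\widetilde q\|_K\le\widetilde C^{2}\|p\|_E^{2}$, use $K\setminus\aol(s)\neq\emptyset$ to ensure $\widetilde q\not\equiv 0$ on $K$, then apply the division inequality~(\ref{division}) with $q=\widetilde q$ to remove the factor $\widetilde q$, and finally take square roots. Your exponent bookkeeping is also right: after the square root the exponent becomes $\tfrac{1}{2}\,m\,\deg\widetilde q\le m\,d\,(k-1)=m(k-1)\deg s$, which is the claimed $m_0$; the additive term $2d(k-1)$ inside the parentheses is absorbed into $M_0$ exactly as you describe.
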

	
\begin{thm} \label{th:V-div2}
Let $V=V(s)\subset \C^{N+1}$ be an algebraic hypersurface given by an~irreducible polynomial $s$ in form (\ref{syk}) with $\deg s=d$ and $K$ be a compact subset of $\C^N$ and $F:=\{(z,y)\in V\: : \: z\in K\}$. If 
$K$ is  determining for polynomials from $\mathcal{P}(z)$ and satisfies the division inequality then $F$ also satisfies the division inequality.
\end{thm}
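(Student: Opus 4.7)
The plan is to transfer the division inequality from $K$ to $F$ by combining a resultant-based Bezout identity for $s$ and a $y$-reduction of $q$ with Proposition~\ref{war_geom}. Fix polynomials $p,q\in\pol(\C^{N+1})$ with $q\not\equiv 0$ on $F$. Dividing each of them by $s$ with respect to $y$ produces $\bar p,\bar q\in W=\pol(z)\otimes\pol_{k-1}(y)$ agreeing with $p,q$ on $V$, so $\|p\|_F=\|\bar p\|_F$ and $\|pq\|_F=\|\bar p\bar q\|_F$; by Proposition~\ref{Prop_stopnie}, $\deg\bar p\le d\deg p$ and $\deg\bar q\le d\deg q$. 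Reducing $\bar p\bar q$ once more yields $\overline{pq}\in W$ with $\overline{pq}\equiv\bar p\bar q\pmod s$. It then suffices to bound $\|\bar p\|_F$ by $\|\overline{pq}\|_F$ with a polynomial-in-degree factor raised to $m'\deg q$.

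The central device is the norm polynomial $Q(z):=\mathrm{Res}_y(s,\bar q)\in\pol(z)$. Since $s$ is irreducible, monic in $y$ of degree $k$, and $\deg_y\bar q<k$, Gauss's lemma gives $\gcd(s,\bar q)=1$ in $\C(z)[y]$, hence $Q\not\equiv 0$. Because $K$ is polynomially determining, $Q\not\equiv 0$ on $K$. A standard Sylvester-matrix bound gives $\deg Q\le C_1(s)\deg q$, which is crucially linear in $\deg q$. The resultant also furnishes a Bezout identity $As+B\bar q=Q$ with $A,B\in\pol(z,y)$, so $B\bar q\equiv Q\pmod s$ and therefore
\[ \bar p\,Q\ \equiv\ B\cdot\overline{pq}\pmod s. \]
Both $\bar p Q$ and the reduction $\bar R\in W$ of $B\cdot\overline{pq}$ lie in $W$ (degree $<k$ in $y$), so they coincide as polynomials. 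Writing $\bar p=\sum_{j=0}^{k-1}p_j(z)y^j$, the $y^j$-coefficient of $\bar R$ is precisely $p_j(z)Q(z)$.

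Now I would chain three estimates. Proposition~\ref{war_geom} (whose hypothesis $K\setminus\mathcal A(s)\ne\emptyset$ holds because $\mathcal A(s)$ is a proper algebraic subvariety and $K$ is polynomially determining) applied to $\bar R$ gives
\[ \|p_j Q\|_K\ \le\ M_0\bigl(\deg\bar R\bigr)^{m_0}\|\bar R\|_F\ \le\ M_0\bigl(\deg\bar R\bigr)^{m_0}\|B\|_F\,\|\overline{pq}\|_F, \]
using $\bar R=\bar p Q\equiv B\cdot\overline{pq}\pmod s$ on $V\supset F$. The division inequality on $K$, applied to $p_j$ with denominator $Q$, yields $\|p_j\|_K\le M(\deg p_j+\deg Q)^{m\deg Q}\|p_j Q\|_K$, and compactness of $F$ yields $\|\bar p\|_F\le\bigl(\sum_{j=0}^{k-1}\|y\|_F^j\bigr)\max_j\|p_j\|_K$. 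Combining these three with the bounds $\deg\bar p,\deg\bar R\le d'(\deg p+\deg q)$ and $\deg Q\le C_1(s)\deg q$ produces
\[ \|p\|_F\ \le\ M_F(\deg p+\deg q)^{m'\deg q}\,\|pq\|_F, \]
with $m'$ depending only on $s$ and on the exponent $m$ of the division inequality on $K$; all dependence on $q$ (through $B$, $Q$, and $\|y\|_F$) is absorbed into $M_F$.

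The main obstacle is the second paragraph: selecting the right denominator on $K$. The virtue of the choice $Q=\mathrm{Res}_y(s,\bar q)$ is precisely fourfold: $Q\in\pol(z)$, is algebraically nonzero (irreducibility of $s$, $\deg_y\bar q<k$), is not identically zero on $K$ by the assumption that $K$ is polynomially determining, and has $z$-degree linear in $\deg q$ so that the final exponent is a genuine constant multiple of $\deg q$; the cofactor $B$ from the Bezout identity then transports $\|p_j Q\|_K$ back to $\|\overline{pq}\|_F$. Everything else is degree bookkeeping supplied by Propositions~\ref{Prop_stopnie} and~\ref{war_geom}.
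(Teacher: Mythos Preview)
Your proof is correct and follows essentially the same route as the paper: both arguments reduce $p$ and $q$ modulo $s$, take the resultant $Q=\mathrm{Res}_y(s,\bar q)$ as the ``denominator'' on $K$, and then combine the division inequality on $K$ with Proposition~\ref{war_geom} to recover $\|pq\|_F$ on the right-hand side. The only cosmetic difference is packaging: the paper encodes multiplication by $\bar q$ in $\pol(z)[y]/(s)$ as a matrix $\mathbf{M}_{\bar q}^s$ with $\det\mathbf{M}_{\bar q}^s=Q$ and uses its adjugate, whereas you use the equivalent Bezout cofactor $B$ from $As+B\bar q=Q$; these express the same identity $p_jQ=(j\text{-th coefficient of the reduction of }B\cdot\overline{pq})$.
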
	

\begin{proof} Fix a polynomial $q\in\mathcal{P}(z,y)$, $q_{|_F}\not\equiv0$ and 
find $\mathfrak{q}\in\pol(z) \otimes \pol_{k-1}(y)$ such that $\mathfrak{q}=q$ on $V(s)$ and write it in the form $\mathfrak{q}(z,y)=\sum\limits_{j=0}^{k-1}q_j(z)y^j$. 
Fix also a polynomial $p\in \mathcal{P}(z,y)$ and find $\mathfrak{p}\in\pol(z) \otimes \pol_{k-1}(y)$ such that $\mathfrak{p}=p$ on $V(s)$ and $\mathfrak{p}(z,y)=\sum\limits_{j=0}^{k-1} p_j(z)\, y^j$. From Lemma \ref{lemat ze stopniami} we have 
\[
\deg \mathfrak{q}\le d\deg q \ \mbox{ and }\ \deg \mathfrak{p}\le d\deg p.
\] Let \ $\mathbf{P}:=[p_0,...,p_{k-1}]^T$. 
Consider the matrix $\mathbf{M}_{\mathfrak{q}}^s$ such that for \ $(z,y)\in V$ we have
\[ \mathfrak{p}(z,y)\: \mathfrak{q}(z,y) \ = \ [1,y,...,y^{k-1}] \ \mathbf{M}_{\mathfrak{q}}^s \ \mathbf{P}(z).\] The determinant of $\mathbf{M}_{\mathfrak{q}}^s$ \ is a polynomial only in $z$ and is equal to the resultant Res$\,_y(\mathfrak{q},s)$. Since $q$ is coprime with $s$, Res$_y\,(\mathfrak{q},s)$ is a non-zero polynomial in $\mathbb{C}^N$ and so is $\det \mathbf{M}_{\mathfrak{q}}^s$. Since $K$ is determining for polynomials from $\mathcal{P}(z)$ we have $\det \mathbf{M}_{\mathfrak{q}}^s\not\equiv0$ on $K$. Observe that 
\[\deg \, \det\, \mathbf{M}_{\mathfrak{q}}^s \le 
k\deg \mathfrak{q} + \tfrac{(k-1)kd}2 \le kd \, \deg q  + \tfrac{(k-1)kd}2 = kd \, (\deg q +\tfrac{k-1}2)\]
Thanks to division inequality on $K$, we have
$$ \|p\|_E \le \sum_{j=0}^{k-1} \|p_j\|_{K} \, \|y\|_E^j \le M\sum_{j=0}^{k-1} (\deg p_j+kd \, (\deg q +\tfrac{k-1}2))^{mkd \, (\deg q +\tfrac{k-1}2)} \|\det \mathbf{M}_{\mathfrak{q}}^s p_j\|_{K} \|y\|_E^j$$
\[
\le M (\deg p+dk^2 \, \deg q)^{mdk^2 \, \deg q} \|(\det \mathbf{M}_{\mathfrak{q}}^s) \mathbf{P}\|_{K} \sum_{j=0}^{k-1} \|y\|_E^j.
\]
Replacing
$(\det \mathbf{M}_{\mathfrak{q}}^s) \mathbf{P}$ by $\mathbf{A}\mathbf{M}_{\mathfrak{q}}^s \mathbf{P}$, where $\mathbf{A}$ denote the transpose of the comatrix of $\det\, \mathbf{M}_{\mathfrak{q}}^s$, we obtain
\[
\|p\|_E\le M (\deg p+dk^2 \, \deg q)^{mdk^2 \, \deg q} \|\mathbf{A}\|\|\mathbf{M}_{\mathfrak{q}}^s \mathbf{P}\|_{K} \sum_{j=0}^{k-1} \|y\|_E^j
\]
Moreover, 
\[\|\mathbf{M}_{\mathfrak{q}}^s \: \mathbf{P}\|_{K}=\|[u_0,...,u_{k-1}]\|_{K} \mbox{ where } (\mathfrak{p}\mathfrak{q})(z,y)=\sum\limits_{j=0}^{k-1} u_j(z)\, y^j \mbox{ on }V(s).\]
By Proposition \ref{war_geom},
\[\|[u_0,...,u_{k-1}]\|_{K} \le M_0 (\deg(\mathfrak{p}\mathfrak{q}))^{m_0} \|\mathfrak{p}\mathfrak{q}\|_E \le M_0 d^{m_0}(\deg p +  \ \deg q)^{m_0} \|pq\|_E\]
and the proof is complete.
\end{proof}

Since $E=\{(z,y_1,y_2)\in V(s_2)\: :\: (z,y_1)\in F\}$ and from Theorem \ref{th:V-div2} $F$ satisfies the division inequality, we can apply Theorem \ref{ThmAdmMesh} twice to obtain

\begin{thm}\label{ThmAdmMesh2} Let $V=V(s_1,s_2)\subset\C^{N+2}$ be an algebraic variety defined by polynomials $s_1$ and $s_2$ of the form (\ref{s1yk}) and (\ref{s2yk}), $s_1$ be an~irreducible polynomial, $K\subset \mathbb{C}^N$ be a compact set with weakly admissible meshes $(\mathcal{A}_n)_{n\in\mathbb{N}}$ and $F:=\{(z,y_1)\in V(s_1)\: :\: z\in K\}$. If $K$ is  determining for polynomials from $\mathcal{P}(z)$ and satisfies the division inequality and $K\setminus\mathcal{A}(s_1)\neq\emptyset$ and $F\setminus\mathcal{A}(s_2)\neq\emptyset$ then \[E:=\{(z,y_1,y_2)\in V(s_1,s_2)\: :\: z\in K\}\] has  weakly admissible meshes $(\mathcal{B}_n)_{n\in\mathbb{N}}$ defined by
\[\mathcal{B}_n=\{(z,y_1,y_2)\in E\ :\ z\in \mathcal{A}_l\cup\{a,b\}\}\]
where $a$ is an element of $K\setminus\mathcal{A}(s_1)$, $b$ is an element of $\pi(F\setminus\mathcal{A}(s_2))$ and
\[l=l(n)=\left\{\begin{array}{rl} 4n+4(k_2^2-k_2)+2(k_1^2-k_1)& \mbox{ if }d_1=k_1 \mbox{ and }d_2=k_2,\\
4d_1n+4d_1(k_2^2-k_2)& \mbox{ if }d_1>k_1 \mbox{ and }d_2=k_2,\\
4d_2n+2(k_1^2-k_1)& \mbox{ if }d_1=k_1 \mbox{ and } d_2>k_2,\\
4d_1d_2n & \mbox{ if }d_1>k_1 \mbox{ and }  d_2>k_2 .
\end{array}\right.\]
More precisely, for every polynomial $P\in\pol_n(\mathbb{C}^{N+2})$ we have
\[\|P\|_E\le C(\mathcal{B}_n) \|P\|_{\mathcal{B}_n}\] 
and $(C(\mathcal{B}_n))_n$ are of polynomial growth in $n$.
\end{thm}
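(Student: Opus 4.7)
The plan is to apply Theorem \ref{ThmAdmMesh} twice, exploiting the fibered decomposition
\[E=\{(z,y_1,y_2)\in V(s_2):(z,y_1)\in F\},\]
so that $E$ sits over $F$ via the projection forgetting $y_2$ on $V(s_2)$, and $F$ sits over $K$ via the projection forgetting $y_1$ on $V(s_1)$. In both steps the fiber is one-dimensional, so each step is a pure application of the hypersurface result.

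Step 1 (from $K$ to $F$). All three hypotheses of Theorem \ref{ThmAdmMesh} applied to $K\subset\C^N$ and $V(s_1)$ are granted: $K$ carries WAM $\{\aol_n\}$, satisfies the division inequality, and $K\setminus\aol(s_1)\ne\emptyset$. The theorem produces WAM $\{\fol_n\}$ on $F$, where $\fol_n=\{(z,y_1)\in V(s_1):z\in\aol_{\ell_1(n)}\cup\{a\}\}$ for any chosen $a\in K\setminus\aol(s_1)$, with $\ell_1(n)=2(n+k_1^2-k_1)$ if $d_1=k_1$ and $\ell_1(n)=2d_1n$ if $d_1>k_1$, and with constants polynomial in $n$.

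Step 2 (from $F$ to $E$). To run Theorem \ref{ThmAdmMesh} a second time, now with compact $F\subset\C^{N+1}$ and hypersurface $V(s_2)\subset\C^{N+2}$, one needs WAM on $F$ (just obtained), $F\setminus\aol(s_2)\ne\emptyset$ (assumed), and the division inequality on $F$. The last point is the only non-trivial check, and is precisely the content of Theorem \ref{th:V-div2}, whose hypotheses—$s_1$ irreducible, $K$ determining for $\pol(z)$, and $K$ with the division inequality—are all assumed in the statement. The second application then yields WAM $\{\bol_n\}$ on $E$ whose points lie over $\fol_{\ell_2(n)}\cup\{b'\}$ for some $b'\in F\setminus\aol(s_2)$, where $\ell_2(n)=2(n+k_2^2-k_2)$ if $d_2=k_2$ and $\ell_2(n)=2d_2n$ if $d_2>k_2$.

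Unwinding the two liftings, $\bol_n$ consists of the points of $E$ whose $z$-coordinate lies in $\aol_{\ell_1(\ell_2(n))}\cup\{a,\pi(b')\}$, so with $b:=\pi(b')$ it matches the set described in the statement. Computing $\ell_1\circ\ell_2$ in the four cases ($d_1=k_1$ or $d_1>k_1$ combined with $d_2=k_2$ or $d_2>k_2$) reproduces the case-by-case formula for $l(n)$, and polynomial growth of the constants is preserved because a polynomial of a polynomial is polynomial. The only real conceptual obstacle is that a single application of Theorem \ref{ThmAdmMesh} does not by itself deliver the division inequality on the lifted set, and one must appeal to Theorem \ref{th:V-div2} to iterate; once that hurdle is cleared, everything reduces to arithmetic on the degree bounds.
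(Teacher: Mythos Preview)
Your proof is correct and follows exactly the strategy the paper uses: apply Theorem~\ref{ThmAdmMesh} once to lift meshes from $K$ to $F$, invoke Theorem~\ref{th:V-div2} to secure the division inequality on $F$, and then apply Theorem~\ref{ThmAdmMesh} a second time to lift from $F$ to $E$; your computation of $\ell_1\circ\ell_2$ in the four cases recovers the stated $l(n)$. The only (harmless) imprecision is in the ``unwinding'' step: the two-step construction actually yields, over the extra point $b=\pi(b')$, only the fiber above the single $b'\in F$ rather than the full preimage of $b$ in $E$, so the set you obtain is a \emph{subset} of the $\mathcal{B}_n$ in the statement---but since enlarging a norming set preserves the norming inequality, the claimed $\mathcal{B}_n$ is a WAM as well.
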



\vskip 5mm

\section{Applications}

\subsection{Classical results} We start this section with three well known results, see e.g., \cite{BV}, to recall optimal polynomial meshes on a segment, on a disk in $\C$ and on a disk in $\R^2\subset\C^2$. 

\begin{thm} \label{mesh na ab}
	Let $[a,b]$ be a segment in $\C$.	
	For any polynomial $p\in \pol_n(\C)$ and any integer number $\lambda>n$ we have
	\[ \|p\|_{[a,b]} \ \le \ \frac1{\cos\frac{n\pi}{2\lambda}} \: \|p\|_{\mathcal{M}_{\lambda,n}} \]
	where \ $\mathcal{M}_{\lambda,n} =\left\{ \frac{a+b}2 + \frac{a-b}2\: \cos\frac{(2j-1)\pi}{2\lambda} \: : \: j=1,...,\lambda\right\}$ \ 
	is the scaled $\lambda$-system of the Chebyshev points, i.e., zeros of the Chebyshev polynomial $T_\lambda(x)=\cos(\lambda \arccos x)$.  
	In particular, for $\lambda =\left\lceil \frac32n \right\rceil$, $n\ge 2$ the set $\mathcal{M}_{\lceil \frac32n \rceil,n}$ is an optimal polynomial mesh for polynomials from $\pol_n(\C)$ and inequality (\ref{mesh}) holds with the constant $C_n=2$. 
\end{thm}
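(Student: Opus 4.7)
The plan is to reduce the general segment $[a,b]$ to the canonical interval $[-1,1]$ by an affine change of variables, and then to recognize the resulting inequality as a classical sampling estimate for trigonometric polynomials. The linear map $\sigma(t)=\frac{a+b}2+\frac{a-b}2\,t$ carries $[-1,1]$ bijectively onto $[a,b]$, sends the standard Chebyshev zeros $t_j=\cos\frac{(2j-1)\pi}{2\lambda}$ onto $\mathcal{M}_{\lambda,n}$, and preserves polynomial degree. Setting $q(t)=p(\sigma(t))$ gives $\|p\|_{[a,b]}=\|q\|_{[-1,1]}$ and $\|p\|_{\mathcal{M}_{\lambda,n}}=\max_j|q(t_j)|$, so without loss of generality one may assume $[a,b]=[-1,1]$ and work with the pure Chebyshev nodes $x_j=\cos\theta_j$, $\theta_j=(2j-1)\pi/(2\lambda)$.

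Next, I would perform the substitution $x=\cos\theta$. Writing $p$ in the Chebyshev basis, $p(x)=\sum_{k=0}^{n}c_k\,T_k(x)$, one obtains the even trigonometric polynomial $g(\theta):=p(\cos\theta)=\sum_{k=0}^n c_k\cos(k\theta)$ of degree at most $n<\lambda$, and the nodes $\theta_j$ are precisely the zeros of $\cos(\lambda\theta)$ in $(0,\pi)$. The theorem then reduces to the Ehlich--Zeller type bound
\[
\max_{\theta\in[0,\pi]}|g(\theta)|\ \le\ \frac{1}{\cos(n\pi/(2\lambda))}\,\max_{j=1,\dots,\lambda}|g(\theta_j)|
\]
for every cosine polynomial $g$ of degree $\le n<\lambda$. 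The standard way to obtain this is to fix an arbitrary $\theta_0\in[0,\pi]$ and write $g(\theta_0)$ by a trigonometric Lagrange-type interpolation formula built from the kernel $\cos(\lambda\theta)/(\cos\theta-\cos\theta_0)$, which is a polynomial of degree $\lambda-1$ in $\cos\theta$ with zeros at the $\theta_j$. Estimating the resulting sum of sampling weights by comparison with the extremal Chebyshev polynomial $T_n$ produces precisely the factor $1/\cos(n\pi/(2\lambda))$; this sharp weight estimate is the genuine technical core of the argument.

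Finally, for the second statement I would just insert $\lambda=\lceil 3n/2\rceil$: since $\lambda\ge 3n/2$, one has $n\pi/(2\lambda)\le\pi/3$, hence $\cos(n\pi/(2\lambda))\ge 1/2$ and the mesh constant is bounded by $2$. Simultaneously, $\operatorname{card}\mathcal{M}_{\lceil 3n/2\rceil,n}=\lceil 3n/2\rceil=\mathcal{O}(n)=\mathcal{O}(\dim\pol_n(\C)_{|[a,b]})$, so the family $\{\mathcal{M}_{\lceil 3n/2\rceil,n}\}_n$ qualifies as an optimal polynomial mesh. The only non-routine ingredient is the Ehlich--Zeller inequality itself; everything else reduces to an affine change of variables, the substitution $x=\cos\theta$, and elementary numerical inequalities.
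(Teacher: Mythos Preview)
The paper does not supply its own proof of this theorem: it is stated in Section~4.1 under the heading ``Classical results'' and simply attributed to the literature (specifically \cite{BV}). There is therefore no argument in the paper to compare yours against.

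That said, your sketch is the standard route to this inequality and is correct in outline. The affine reduction to $[-1,1]$, the substitution $x=\cos\theta$ turning $p$ into a cosine polynomial of degree $\le n$, and the identification of the Chebyshev nodes with the zeros of $\cos(\lambda\theta)$ are all routine. The crux is indeed the Ehlich--Zeller sampling inequality for trigonometric polynomials, which gives exactly the constant $1/\cos(n\pi/(2\lambda))$; you rightly flag this as the only non-elementary ingredient, though your description of how it is derived (``a trigonometric Lagrange-type interpolation formula built from the kernel $\cos(\lambda\theta)/(\cos\theta-\cos\theta_0)$'') is somewhat loose and would need to be made precise in a full write-up. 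Your verification of the second assertion is clean: $\lambda=\lceil 3n/2\rceil\ge 3n/2$ forces $n\pi/(2\lambda)\le\pi/3$, hence $C_n\le 2$, and $\mathrm{card}\,\mathcal{M}_{\lceil 3n/2\rceil,n}=\lceil 3n/2\rceil=\mathcal{O}(n)=\mathcal{O}(\dim\pol_n([a,b]))$.
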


\begin{thm} \label{mesh na disk in C}
	Let $D(a,r)$ be a disk in $\C$ with radius $r$ and centre at $a$.	
	For any polynomial $p\in \pol_n(\C)$ and any integer number $\lambda>n$ we have
	\[ \|p\|_{D(a,r)} \ \le \ \frac1{\cos\frac{n\pi}{2\lambda}} \: \|p\|_{\mathcal{M}_{\lambda,n}} \]
	where \ 
	$\mathcal{M}_{\lambda,n} =\left\{ a+r\: \exp\left(i\: \frac{j\pi}{\lambda}\right) \: : \: j=1,...,2\lambda\right\}=a+r\sqrt[2\lambda]1$ \
	is the set of equidistributed $2\lambda$ points on the circle $C(a,r)$.    
\end{thm}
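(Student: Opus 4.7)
The plan is to reduce the inequality on the disk to a classical trigonometric polynomial inequality in two elementary steps and then invoke a known sharp bound on equispaced nodes.

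First, I would observe that since $p$ is a holomorphic polynomial on $\C$, the maximum modulus principle gives
\[ \|p\|_{D(a,r)} \ = \ \|p\|_{C(a,r)}, \]
where $C(a,r)=\{z\in\C:|z-a|=r\}$ is the boundary circle. The mesh $\mathcal{M}_{\lambda,n}$ lies entirely on $C(a,r)$, so it suffices to prove the estimate with $D(a,r)$ replaced by $C(a,r)$ on the left-hand side. This is the only use of holomorphy and is completely standard.

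Next, parametrize the circle by $\theta\mapsto a+re^{i\theta}$, $\theta\in[0,2\pi]$, and set $\tau(\theta):=p(a+re^{i\theta})$. Expanding $(a+re^{i\theta})^k$ for $k=0,\dots,n$ shows that $\tau$ is a trigonometric polynomial of degree at most $n$ in $\theta$. The mesh points correspond bijectively to the $2\lambda$ equispaced angles $\theta_j=j\pi/\lambda$, $j=1,\dots,2\lambda$, and we have $\|\tau\|_{[0,2\pi]}=\|p\|_{C(a,r)}$ together with $\max_{j}|\tau(\theta_j)|=\|p\|_{\mathcal{M}_{\lambda,n}}$. Thus the claim reduces to proving, for any trigonometric polynomial $T$ of degree at most $n$ and any integer $\lambda>n$,
\[ \|T\|_{[0,2\pi]} \ \le \ \frac{1}{\cos\frac{n\pi}{2\lambda}} \ \max_{1\le j\le 2\lambda} |T(\theta_j)|. \]
This is the classical Marcinkiewicz--Zygmund / Ehlich--Zeller inequality on equispaced nodes, which is precisely the trigonometric counterpart of the Chebyshev-node estimate of Theorem \ref{mesh na ab}; indeed one can also derive it by applying Theorem \ref{mesh na ab} after an appropriate rotation that puts the maximum of $|T|$ at a convenient point.

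The main obstacle is the sharp constant $\sec(n\pi/(2\lambda))$. A naive Bernstein-type argument, bounding $|T(\theta)-T(\theta_j)|$ by $n\|T\|_\infty\cdot\pi/(2\lambda)$ for the nearest node $\theta_j$, yields only the weaker constant $(1-n\pi/(2\lambda))^{-1}$, which does not match the stated bound. Recovering the sharp $\sec$-constant requires the extremal property of the Chebyshev polynomial rather than just a crude modulus-of-continuity argument; since the theorem is quoted as a classical result from \cite{BV}, the cleanest route in the paper is to cite this sharp bound and combine it with the two reductions above.
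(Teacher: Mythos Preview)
The paper does not prove this theorem: it is stated in Section~4.1 as one of three ``well known results'' and simply referenced to \cite{BV}, with no argument supplied. So there is nothing in the paper to compare against.

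Your outline is the standard route and is correct: maximum modulus reduces to the boundary circle, the substitution $\theta\mapsto p(a+re^{i\theta})$ produces a trigonometric polynomial of degree at most $n$ (in fact one with only non-negative frequencies, since $p(a+re^{i\theta})=\sum_{k=0}^n c_k r^k e^{ik\theta}$), and the sharp bound on $2\lambda$ equispaced nodes with constant $\sec(n\pi/(2\lambda))$ is exactly the Ehlich--Zeller / Marcinkiewicz--Zygmund inequality. Your remark that the naive Bernstein argument only gives $(1-n\pi/(2\lambda))^{-1}$ is also correct and is a useful sanity check. The aside about deriving it from Theorem~\ref{mesh na ab} ``by rotation'' is a bit loose as stated---the actual link goes through the cosine substitution $x=\cos\theta$ relating Chebyshev nodes on $[-1,1]$ to equispaced angles, not a rotation---but since you are ultimately citing the sharp trigonometric inequality anyway, this does not affect the validity of the argument.
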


\begin{thm} \label{mesh na disk in R2}
	Let $B(a,r)$ be a disk in $\R^2\subset \C^2$ with radius $r$ and centre at $a$.	
	For any polynomial $p\in \pol_n(\C^2)$ and any integer number $\lambda>n$ we have
	\[ \|p\|_{B(a,r)} \ \le \ \frac1{\cos^2\frac{n\pi}{2\lambda}} \: \|p\|_{\mathcal{M}_{\lambda,n}} \]
	where \
	$\mathcal{M}_{\lambda,n} =\left\{ a+ \left(r_j \cos \frac{m\pi}\lambda,\  r_j \sin \frac{m\pi}\lambda\right) \right\}_{j,m=1,...,\lambda}, \ \	r_j=r\: \cos\frac{(2j-1)\pi}{2\lambda}.$
\end{thm}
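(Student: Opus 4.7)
The plan is to reduce the bivariate estimate to two successive one-dimensional inequalities, one along each radius of $B(a,r)$ and one along each concentric circle, with each contributing a factor $1/\cos\frac{n\pi}{2\lambda}$.

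First I would parametrize the disk in polar coordinates centered at $a$: every $(x,y)\in B(a,r)$ can be written as $(a_1+\rho\cos\theta,\, a_2+\rho\sin\theta)$ with $\rho\in[-r,r]$ and $\theta\in[0,\pi]$ (the identification $(\rho,\theta)\sim(-\rho,\theta+\pi)$ makes this parametrization surjective). For a fixed $\theta$, the map $\rho\mapsto p(a_1+\rho\cos\theta,\, a_2+\rho\sin\theta)$ belongs to $\pol_n(\C)$, so applying Theorem \ref{mesh na ab} on $[-r,r]$---whose rescaled Chebyshev zeros form the symmetric set $\{r_j\}_{j=1,\ldots,\lambda}$ with $r_j=r\cos\frac{(2j-1)\pi}{2\lambda}$---gives
\[ |p(x,y)| \ \le\ \frac{1}{\cos\frac{n\pi}{2\lambda}}\,\max_{j=1,\ldots,\lambda}\bigl|p(a_1+r_j\cos\theta,\, a_2+r_j\sin\theta)\bigr|. \]
Then, fixing $j$, the function $T_j(\theta):=p(a_1+r_j\cos\theta,\, a_2+r_j\sin\theta)$ is a trigonometric polynomial of degree at most $n$, and the Marcinkiewicz--Zygmund inequality on $2\lambda$ equispaced nodes (which is equivalent, via the maximum modulus principle, to Theorem \ref{mesh na disk in C}) yields
\[ \sup_{\theta\in[0,2\pi)}|T_j(\theta)| \ \le\ \frac{1}{\cos\frac{n\pi}{2\lambda}}\,\max_{m=0,\ldots,2\lambda-1}\bigl|T_j(m\pi/\lambda)\bigr|. \]
Composing the two estimates produces the announced factor $1/\cos^2\frac{n\pi}{2\lambda}$.

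The main (and essentially only) piece of bookkeeping I anticipate is the identification of the resulting $2\lambda^2$-indexed sample set with the $\lambda^2$-point mesh $\mathcal{M}_{\lambda,n}$, whose angular index runs only over $m=1,\ldots,\lambda$. This is compensated by the two signs of the radii: since $r_{\lambda+1-j}=-r_j$, every evaluation at an angle $m\pi/\lambda$ with $m\in\{\lambda+1,\ldots,2\lambda\}$ coincides with an evaluation at the complementary angle $(m-\lambda)\pi/\lambda\in(0,\pi]$ with the opposite radius, so halving the angular range is exactly offset by the sign freedom in $r_j$. Once this indexing check is spelled out the asserted inequality drops out immediately.
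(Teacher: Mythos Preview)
The paper does not prove this theorem: it is simply quoted as one of three ``well known results'' with a reference to \cite{BV}. Your two-step polar argument (radial Chebyshev estimate followed by an angular trigonometric estimate, each contributing a factor $1/\cos\frac{n\pi}{2\lambda}$) is exactly the standard proof from that reference, and the bookkeeping that collapses the $2\lambda\times\lambda$ sample set to the $\lambda\times\lambda$ mesh via $r_{\lambda+1-j}=-r_j$ is correct.

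One point to tighten: your parenthetical claim that the trigonometric Marcinkiewicz--Zygmund inequality is ``equivalent, via the maximum modulus principle, to Theorem~\ref{mesh na disk in C}'' is not accurate in the direction you need. Theorem~\ref{mesh na disk in C} concerns \emph{algebraic} polynomials of degree $n$, which on the boundary circle become one-sided exponential sums $\sum_{k=0}^{n}c_ke^{ik\theta}$. Your $T_j(\theta)=p(a_1+r_j\cos\theta,\,a_2+r_j\sin\theta)$, however, is a \emph{general} trigonometric polynomial of degree $n$, with frequencies from $-n$ to $n$. Converting it to an algebraic polynomial via $z^{n}T_j(\theta)=q(z)$ gives $\deg q=2n$, and Theorem~\ref{mesh na disk in C} applied to $q$ would then require $\lambda>2n$ and yield only the weaker constant $1/\cos\frac{n\pi}{\lambda}$. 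What you actually need is the trigonometric Ehlich--Zeller inequality (for a trig polynomial of degree $n$ on $2\lambda>2n$ equispaced nodes, constant $1/\cos\frac{n\pi}{2\lambda}$), which is a separate classical fact---also in \cite{BV}---and should be cited directly rather than derived from Theorem~\ref{mesh na disk in C}. With that correction the proof is complete.
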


If we choose $\lambda$ such that $\tfrac{n}\lambda$ can be estimated from above by a constant independent of $n$, then we obtain optimal polynomial meshes in the three above theorems. If not, we have weakly admissible meshes. 

\subsection{On algebraic hypersurfaces} We give here three quite simple examples of applications of our results concerning algebraic sets given by one algebraic equation $s(z,y)=0$. Two examples are chosen so that the compact sets under consideration are in a real space. 

\begin{example} 
We firstly give optimal polynomial meshes on the unit sphere in $\R^3$:
\[ S_2=\{(x,y,z)\in \R^3\: : \: x^2+y^2+z^2=1\}. \]
Let $V=\{(x,y,z)\in \C^3\: : \: x^2+y^2+z^2=1\}$. We have $s(x,y,z)=x^2+y^2+z^2-1$ and $d=k=2$. 
In the set $K=\{(x,y)\in \R^2\: : \: x^2+y^2\le 1\}$ we take the optimal polynomial meshes $\{\aol_{n}\}_n$
constructed for $\lambda=2n$ in Theorem \ref{mesh na disk in R2}, that is $\aol_{n}=\mathcal{M}_{2n,n}$. By Theorem \ref{yk=s0}, $\ell(n)=2n$ and thus
\[ \mathcal{B}_n=\{(x,y,z)\in V\: : \: (x,y)\in\aol_{2n}\} \]
\[ = \{(x,y,z)\in \R^3 \: :\: x=r_j\cos \frac{m\pi}{4n}, \: y=r_j\sin \frac{m\pi}{4n}, \: x^2+y^2+z^2=1, \: j,m=1,...,4n \}  \]
where $r_j=\cos\frac{(2j-1)\pi}{8n}$.
Consequently, for any polynomial $p\in\pol_n(\C^3)$
\[ \|p\|_{S_2}\le 2 C_{\ell(n)}^{1/2}\|p\|_{\bol_n} = \frac{2}{\cos \frac{\ell(n)\pi}{2\lambda}} \|p\|_{\bol_n} = \frac2{\cos\frac\pi4}\|p\|_{\bol_n}=2\sqrt{2}\|p\|_{\bol_n} \]
and card\,$\bol_n=2\mbox{card}\aol_{\ell(n)}=2\dot(4n)^2=32n^2$, cf. \cite[Corollary 1]{LSV}.

	In a similar way, we can construct admissible meshes on a spherical section, cf. \cite{SV18}. Let $E=\{(x,y,z)\in V\: : \: (x,y)\in K\}$ where $K=\{(x,y)\in \R^2\: : \: x=\varrho \cos \varphi, \ y= \varrho \sin \varphi, \ \varrho \in [0,1], \ \varphi\in [0,\alpha]\}$, \ $\alpha\in (0,\pi]$. Observe that $E$ is a subset of $\R^3$. Using norming sets constructed in \cite{SV18} for  sections of a disk, by Theorem \ref{yk=s0} we can obtain optimal admissible meshes on a spherical lune $E$. 
\end{example}

\begin{example}\label{Ex5}
	Now consider the algebraic hypersurface 
	\[V=\{(x,y,z)\in \C^3\: : \: z^2 -x^3 -y^2=0\}\]
	and its compact subset 
	\[E=\{(x,y,z) \in V\: : \: (x-1)^2+y^2\le 1, \ x,y\in \R\}.\]
Observe that \ $E\subset\R^3$. 
The projection of $E$ into the space of two complex variables $(x,y)$ yields the set $K=\{(x,y)\in \R^2\: : \: (x-1)^2+y^2\le 1\}.$ 
The optimal polynomial meshes $\{\aol_n\}_n$ on $K$ are given in Theorem \ref{mesh na disk in R2}. Indeed, the family $\mathcal{A}_\ell =\mathcal{M}_{\lambda,\ell}$ with $\lambda=\left\lceil \tfrac{4\ell}3 \right\rceil$ is an optimal polynomial mesh in $K$, since $\tfrac{\ell}\lambda=\tfrac{\ell}{\left\lceil \tfrac{4\ell}3 \right\rceil} \le \tfrac34$.
In particular, it can be seen that, independently of $\ell$, we have $\|p\|_K \leq 6.8285 \|p\|_{\mathcal{A}_\ell}$, for any $p \in {\mathcal{P}}_\ell(K)$.

	For the polynomial $s(x,y,z)=z^2-x^3-y^2$, we have $d=\de s=3$, $k=2$. By Theorem \ref{yk=s0}, 
    $\bol_n=\{(x,y,z)\in V\: : \: (x,y)\in \aol_{\ell(n)}\}$ is an optimal polynomial mesh in $E$ where $\ell=\ell(n) = 3n+1 $ for any $n$. More precisely, $\lambda=\left\lceil \tfrac{4\,\ell(n)}3\right\rceil=\left\lceil \tfrac{4(3n+1)}3\right\rceil=4n+2$ and 
	$$ \bol_n=\{(x,y,z)\: : \: x=1+r_j\cos \frac{m\pi}{4n+2}, \ y=r_j\sin\frac{m\pi}{4n+2}, \ z^2 -x^3 -y^2=0, \ j,m=1,...,4n+2\}, $$
	$r_j=\cos\frac{(2j-1)\pi}{8n+4}$.
	We can easily see that $\ca \bol_n=2 \,(4n+2)^2=\mathcal{O}(n^2)=\mathcal{O}({\rm dim}\,\pol_n(E))$ and $C_n=k\, C_{\ell(n)}^{1/k}= \frac2{\left(\cos^2\frac{\ell(n)\pi}{2\lambda}\right)^{1/2}}= \frac2{\cos\frac{(3n+1)\pi}{8n+4}}\le \frac2{\cos\frac{3\pi}8} \approx 5.2263$. Finally, for every polynomial $p\in\pol_n(\C^3)$ we have
	\[ \|p\|_E \ \le \ 5.23 \: \|p\|_{\bol_n}\]
	and $\bol_n$ are optimal polynomial meshes on $E\subset V(s)$.

    In Figures {\ref{fig:FigExa5_1}} and {\ref{fig:FigExa5_2}}, we show respectively the norming sets $\aol _7={\mathcal{M}}_{10,7}$ and ${\bol _2}$ providing details about their cardinalities.

	\begin{figure}[!h]
        \includegraphics[scale=0.3]{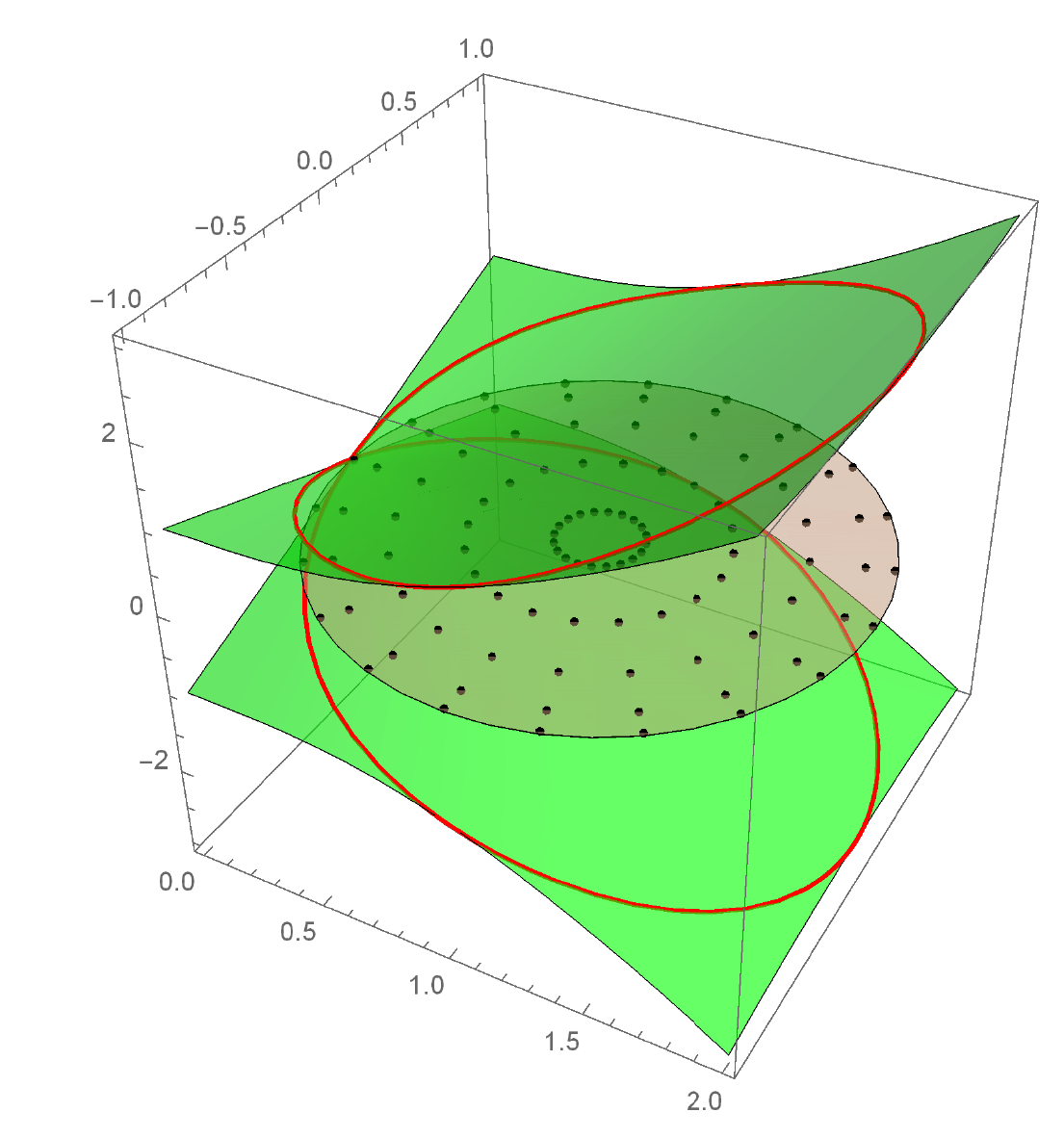}
		\caption{The curves in red represent the boundary of the set $E$ in Example \ref{Ex5}. The black dots are the points of the disk $K$ that define the norming set $\aol _7={\mathcal{M}}_{10,7}$, used to determine  $\bol _2$ on $E$. In particular, $\ca \aol_7=100$ and  $\|p\|_K \leq 4.86 \|p\|_{\aol _n}$, $p \in {\mathcal{P}}_7(K)$.}
		\label{fig:FigExa5_1}
	\end{figure}
	
    \begin{figure}[!h]
        \includegraphics[scale=0.3]{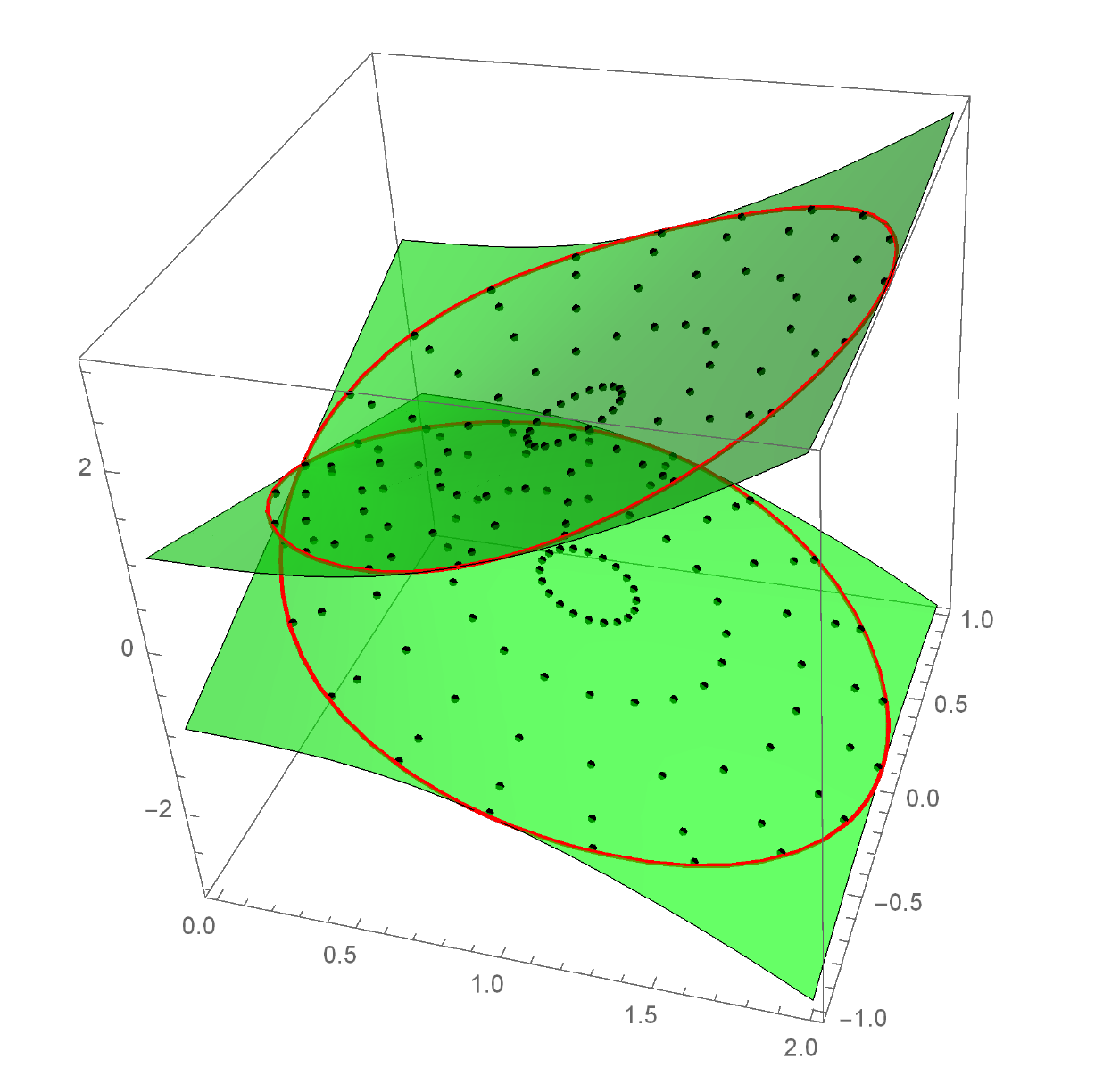}
		\caption{The black dots are the points of the norming set $\bol _2$ for $E$ in Example \ref{Ex5}. In particular, $\ca \bol_2=200$ and for any $n$ we have $\|p\|_E \leq 5.23 \|p\|_{\bol _n}$, $p \in {\mathcal{P}}_n(E)$.}
		\label{fig:FigExa5_2}
	\end{figure}	
	 
\end{example}  

\begin{example}
	In this example we construct optimal polynomial meshes on the set
	\[E=\{(x,y)\in \C^2 \: : \: |x|\le 1, \ y^3-x^2+1=0\}\]
	that is a compact subset of the algebraic hypersurface $V=\{(x,y)\in \C^2 \: : \: y^3-x^2+1=0\}$ \ over the unit disk, i.e. $K=D(0,1)$. We have $s(x,y)=y^3-x^2+1$, $d=3=k$ and by means of Theorem \ref{yk=s0}, we obtain $\ell(n)=3n$.  
    If ${\mathcal{M}}_{\lambda,n}$ are the norming sets defined in Theorem \ref{mesh na disk in C}, setting ${\mathcal{A}}_{n}={\mathcal{M}}_{\lambda,n}$ with $\lambda=\left\lceil\frac43\ell(n)\right\rceil=4n$, we get that the family $\{{\mathcal{A}}_{n}\}_{n}$  is actually an optimal admissible mesh where $C_n = \frac{1}{\cos{\frac{\pi}{8}}} \approx 1.0824$. Thus if
	\[ \bol_n=\{(x,y)\in V\: : \: x\in \aol_{3n}\}=\{(x,y)\in V\: : \: x^{8n}=1\}, \ \ \ \ \ n\in \N\]
	the family $\{\bol_n\}_n$ is an optimal admissible mesh over $E$ because $\ca \bol_n=24n=\mathcal{O}({\rm dim}\,\pol_n(E))$ and for all $p\in\pol_n(\C^2)$ we have
	$$ \|p\|_E \le 3 C_{\ell(n)}^{1/3} \|p\|_{\bol_n} = 3 C_{3n}^{1/3} \|p\|_{\bol_n} = \frac3{\left(\cos\frac{3n\pi}{8n} \right)^{1/3}} \|p\|_{\bol_n}  = \frac3{\left(\cos\frac{3\pi}{8} \right)^{1/3}} \|p\|_{\bol_n} \le 3.1 \|p\|_{\bol_n}.
	$$ 
\end{example} 

\subsection{On algebraic sets of codimension greater than one}
We present here one example that are not directly related to the special case regarded in Theorem \ref{TwiceSpecific}. However, by means of Gr\"obner basis, we are able to write it in such a way that we can use this theorem. 

\begin{example}\label{VivWin}
Take $V=V(x^2+y^2+z^2-4,\ x^2+4y^2-z^2-10x+4)$. We can check that \[V=V(x^2+y^2+z^2-4,\ x^2+y^2-2x).\] Consider the set $K=[0,2]$ and $$E=\{(x,y,z)\in V\: :\: x\in K\}=\{(x,y,z)\in\C^3\: :\: x^2+y^2+z^2-4=0, \, x^2+y^2-2x=0,\ x\in[0,2]\}.$$ Observe that $E$ is a subset of $\R^3$. This real curve is called \textit{Viviani's window}. 

\begin{figure}[!h]
        \includegraphics[scale=0.3]{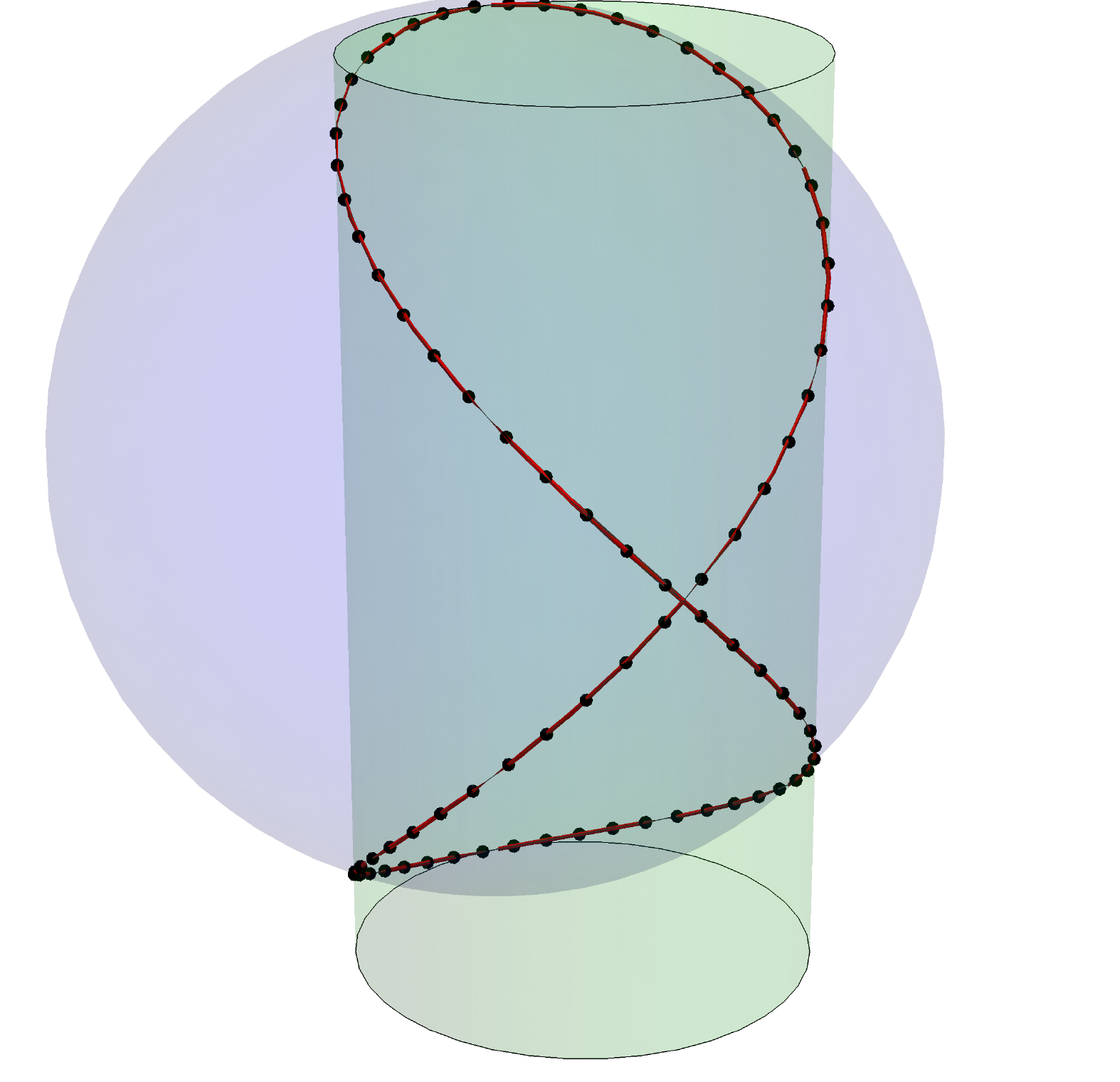}
		\caption{The norming set $\bol _4$ for Viviani's window $E$ in Example \ref{VivWin}, $\lambda=20$, $\ca \bol_4=80$ and $C(\bol_4)=3.8$.}
		\label{fig:FigExaViv1}
	\end{figure}
Following Theorem \ref{mesh na ab}, for $K=[0,2]$ and $\lambda > n$, the set
\[
{\mathcal{M}}_{\lambda,n} =\left\{ 1 + \cos\frac{(2j-1)\pi}{2\lambda} \: : \: j=1,...,\lambda\right\}.
\]  is such that 
$$
\|p\|_{[0,2]} \leq {\frac{1}{\cos \frac{n\pi}{2\lambda}}} \|p\|_{{\mathcal{M}}_{\lambda,n}}.
$$
If $\aol_n={\mathcal{M}}_{\lambda,n}$ for $\lambda=\lceil \frac{5n}{4}\rceil$, then the family $\{ \aol_n \}_n$ is an optimal admissible mesh since for each $n$
$$
\|p\|_K \leq \frac{1}{\cos{\frac{n\pi}{2\lceil \frac{5n}{4}\rceil}}}\|p\|_{\aol_n} \leq 3.2361 \|p\|_{\aol_n}.
$$

\begin{figure}[!h]
        \includegraphics[scale=0.3]{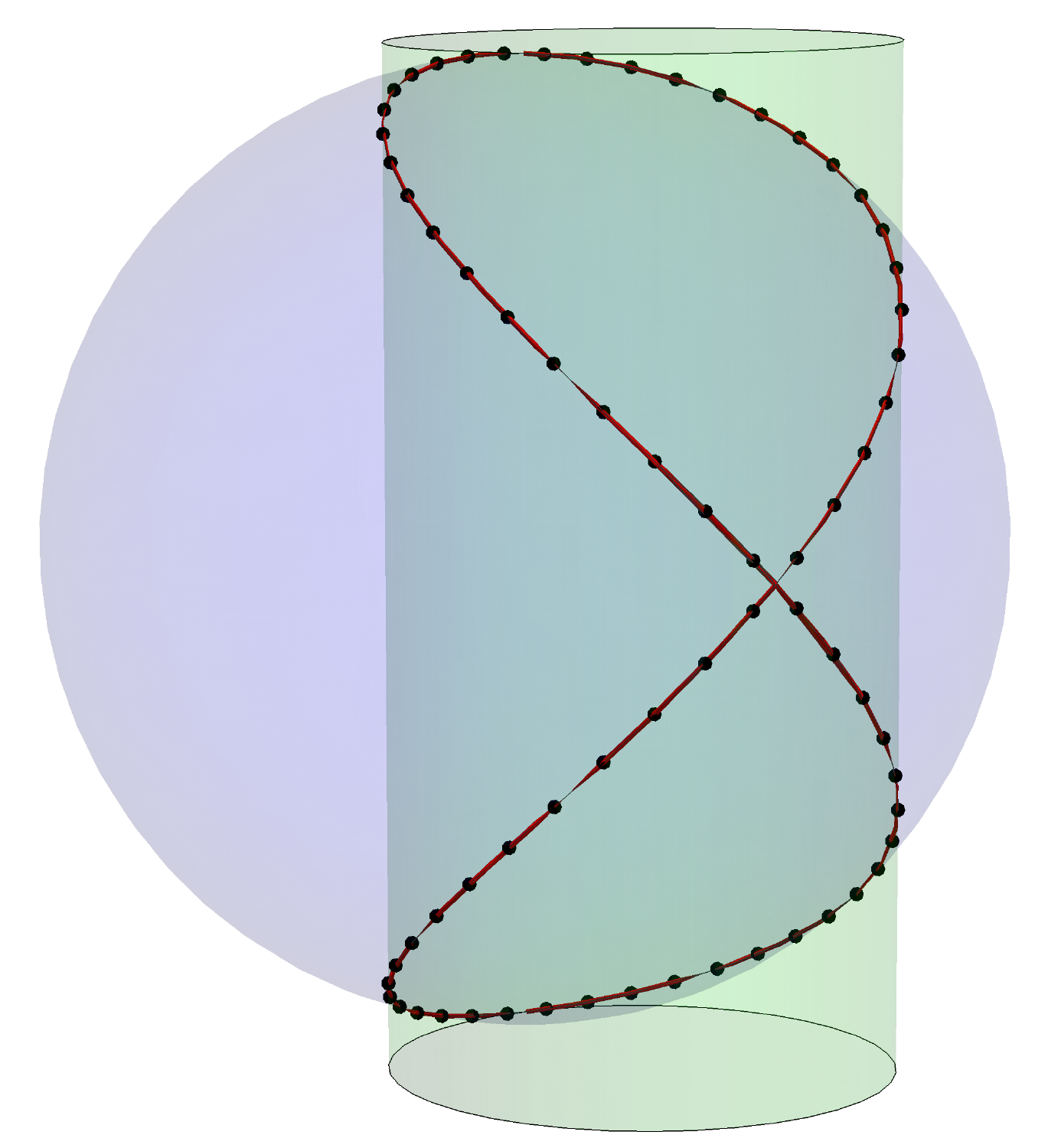}
		\caption{The norming set $\bol _4$ for Viviani's window $E$ in Example \ref{VivWin}, $\lambda=17$, $\ca \bol_4=68$ and $C(\bol_4)=5.2$.}
		\label{fig:FigExaViv2}
	\end{figure}

We have $s_1(x,y)=y^2+x^2-2x$ with $d_1=k_1=2$ and $s_2(x,y,z)=z^2+y^2+x^2-4$ with $d_2=k_2=2$. Clearly, $[0,2]$ is determining for polynomials from $\mathcal{P}(\mathbb{C})$. From Theorem \ref{TwiceSpecific} we obtain $\ell(n)=4n$ and consequently
$$ \bol _n= \{(z,y_1,y_2)\in V\: : \: z\in \aol_{4n}={\mathcal{M}}_{5n,4n}\}\!=\!\{(z,y_1,y_2)\in V\: : \: z=\!1 \!+ \cos\frac{(2j-1)\pi}{10n},\: j\!=\!1,\ldots,5n \}$$ 
is an optimal polynomial mesh on $E$. It can be easily seen that $\ca \bol_n=4\: \ca \aol_{4n}=20n$. Finally, for all $p\in\pol_n(\C^3)$ we have
	\[ \|p\|_E \le 2\sqrt{2} C_{4n}^{1/4} \|p\|_{\bol_n} = \frac{2\sqrt{2}}{\left(\cos\frac{4n\pi}{10n} \right)^{1/4}} \|p\|_{\bol_n}  = \frac{2\sqrt{2}}{\left(\cos\frac{2\pi}{5} \right)^{1/4}} \|p\|_{\bol_n} \le 3.8 \|p\|_{\bol_n}.\]

	\end{example}

\vskip 5mm

\section{Numerical examples}

In this section we consider the sets $E$ proposed in Example \ref{Ex5} and Example \ref{VivWin}, using the previously presented optimal admissible meshes $\bol_n=\{P_i\}_{i=1,\ldots,N_n}$ to determine good pointsets for interpolation as Approximate Fekete Points and Discrete Leja Points (shortened respectively with the acronyms AFP and DLP) as well as the classical least-squares on $\bol_n$, testing their performance. For a description of AFP and DLP, see e.g. \cite{BDMSV}.

All the routines are available as open source codes in {\cite{BCKS}} and where tested on an Intel Core Ultra 5 125H 3.60 GHz processor with 16GB of RAM, using Matlab 2024a.

We start our analysis from the set $E$ in Example {\ref{Ex5}}.
As test functions we adopt
\begin{eqnarray}
f_1(x,y,z) &=& (x+0.5 \, y +2 \, z+1)^{14};\nonumber \\
f_2(x,y,z) &=& \exp(-(x^2+0.5\,y^2+2\,z^2));\nonumber \\
f_3(x,y,z) &=& \sin(4\,x+5\,y+3z); \nonumber \\
f_4(x,y,z) &=& \sqrt{(x-1)^2+y^2+(z-1)^2}+\sqrt{(x-1)^2+y^2+(z+1)^2}. \nonumber 
\end{eqnarray}

For each $f=f_k$, $k=1,2,3,4$, after computing the interpolant/approximant ${\mathcal{L}}_n f$ for $n=2,4,\ldots,16$, we evaluate the errors $$\frac{\|f(\bol_{30})-{\mathcal{L}}_n f(\bol_{30}) \|_2}{\|f(\bol_{30})\|_2},$$ being $f(\bol_{30})=(f(P_i))_{i=1,\ldots,N_{30}}$ and ${\mathcal{L}}_n f(\bol_n) =({\mathcal{L}}_n f(P_i))_{i=1,\ldots,N_{30}}$.

The first three functions are regular, the third one having some oscillations, while the last one is the distance from the points $(1,0,1)$ and $(1,0,-1)$ that belong to the hypersurface.

As for the polynomial basis of ${\mathcal{P}}_n(E)$ we proceed as follows. We
\begin{itemize}
\item determine numerically, by means of the points in $X=\bol_n=\{P_i\}_{i=1,\dots,N_n} \subset E$, an approximation $[a_1,b_1] \times [a_2,b_2] \times [a_3,b_3]$  of the smaller Cartesian parallelepiped containing the set $E$;
\item being ${\eta}_n:=\mbox{dim}({\mathcal{P}}_n(E))$, evaluate the tensorial Chebyshev basis $\{\phi_{i}\}_{i=1,\ldots,{\eta}_n}$ of total degree $n$, shifted on $[a_1,b_1] \times [a_2,b_2] \times [a_3,b_3]$, on the norming set $X$, so determining the Chebyshev-Vandermonde-like matrix $U_X=(\phi_j(P_i))_{i,j} \in { \mathbb{R}}^{N_n \times {\eta}_n}$;
\item  compute the QR factorisation with column pivoting $U_X(:,\pi)=QR$, where $\pi$ is a permutation vector;
\item select the orthogonal matrix $V_X=Q(:,1:{\eta}_n)$; the first ${\eta}_n$ columns correspond to an orthonormal basis $\{\psi_{i}\}_{i=1,\ldots,{\eta}_n}$ of ${\mathcal{P}}_n(E)$, w.r.t. the discrete measure (X,{\bf{w}}), where $w_k=1$, $k=1,\ldots,N_n$.
\end{itemize}

Observe that in this example the quantity ${\eta}_n={\mbox{dim}}({\mathcal{P}}_n(E))$ is known theoretically, see Remarks \ref{dim for hyperalg} and \ref{dim for alg}. If this is not available, one could use ${\eta}_n={\mbox{rank}}(U_X)$ that in the numerical implementation can be obtained by SVD decomposition of $U_X$ in its less costly version that gives only the singular values (with
a threshold), as used by the rank Matlab/Octave native function. A second alternative could consist in adopting a Rank-Revealing QR factorization algorithm (RRQR). These algorithms, however, are not at hand in standard Matlab and typically require the use of MEX files (see e.g., {\cite{MEX}}).

For the evaluation of the Vandermonde matrix $V_Y$ relatively to the basis $\{\psi_{i}\}_{i=1,\ldots,{\eta}_n}$ on the pointset $Y \subset E$, we first determine the Chebyshev-Vandermonde matrix $U_Y=U(Y)$ and then, in Matlab notation, compute $U_Y(:,\pi)/R$.

From the numerical experiments described in Figure \ref{fig:num_01}, we see that
\begin{enumerate}
\item since $f_1 \in {\mathcal{P}}_{14}$ the interpolants/approximants of degree larger or equal to $14$ provide an approximation close to machine precision;
\item the function $f_2$ is well approximated in view of its regularity;
\item the trigonometric polynomial $f_3$, though is also regular, results more problematic in view of its oscillations;
\item the function $f_4$, has a decreasing error w.r.t. the polynomial degree $n$, but has in general inferior approximation in view of the presence of a square root singularity.
\end{enumerate}

In all these tests the approximation quality is almost the same for interpolation at AFP and DLP, with some better results achieved by least-squares approximation.

\begin{figure}[!h]
        \includegraphics[scale=0.45]{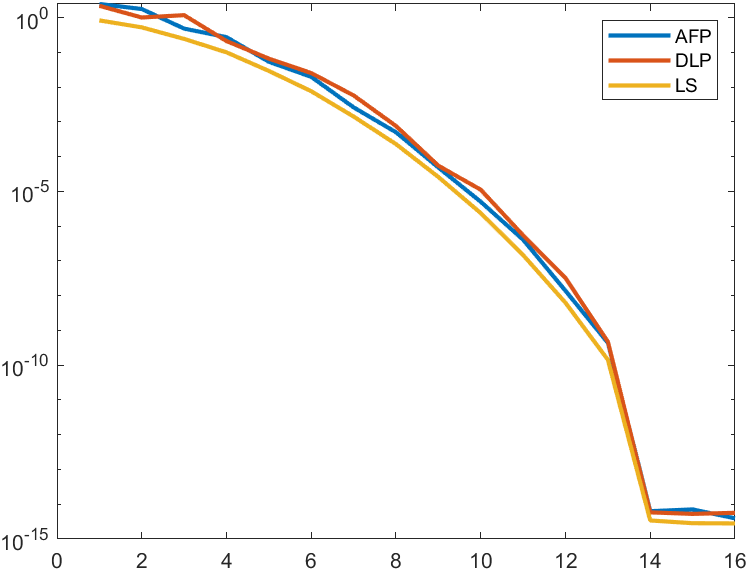}
        \includegraphics[scale=0.45]{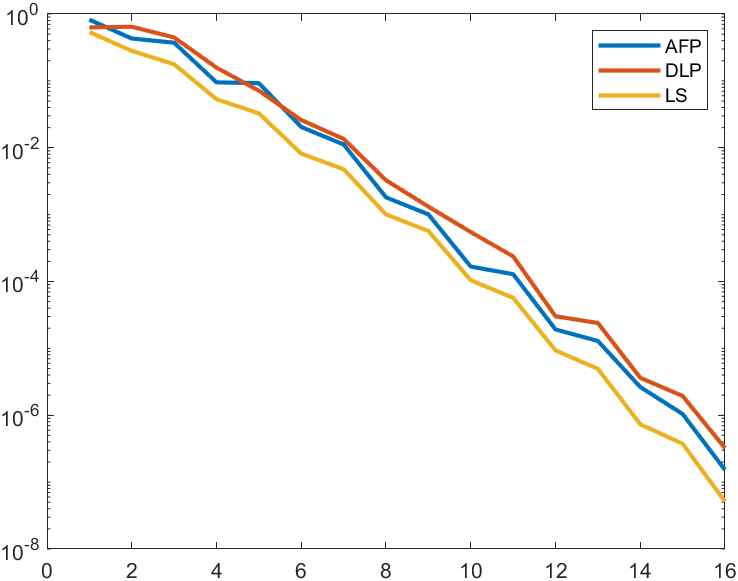} 
        \includegraphics[scale=0.45]{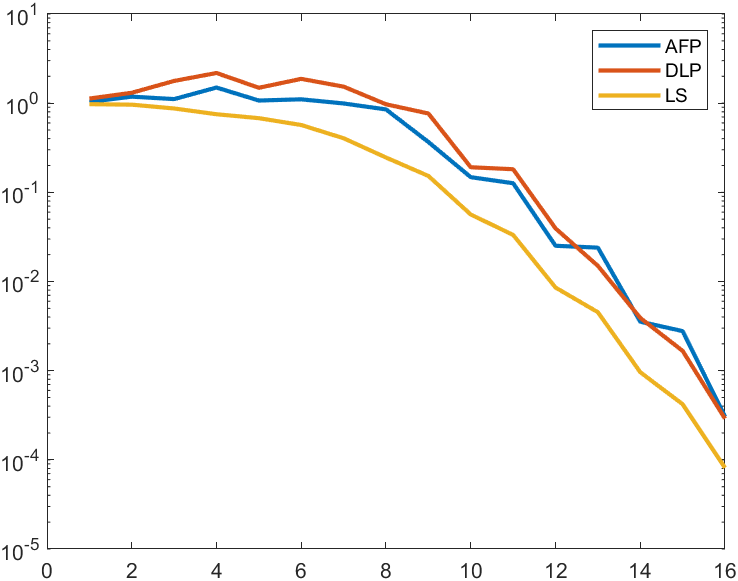}
        \includegraphics[scale=0.45]{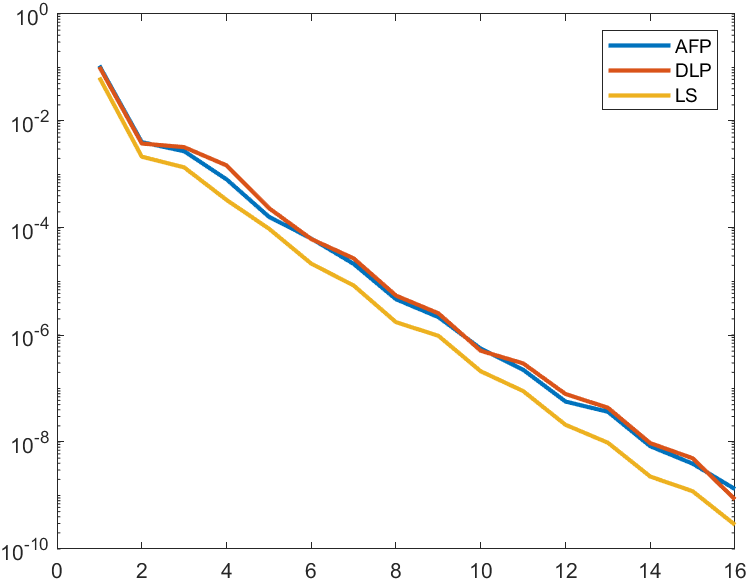}
		\caption{From top to bottom, left to right, we plot the errors in approximating the functions $f_1$, $f_2$, $f_3$, $f_4$ by means of interpolation at Approximate Fekete Points (AFP), Discrete Leja Points (DLP) and least-squares on optimal Admissible meshes ${\mathcal{B}}_n$ for $n=1,2,\ldots,16$, on the set $E$ described in Example {\ref{Ex5}}.}
		\label{fig:num_01}
	\end{figure}

In Figure \ref{fig:num_02}, we plot an approximation of the Lebesgue constants. Our implementation follows that described in \cite[p.435]{BCKSV} for $W=\mbox{diag}({\bf{w}})=I_{N_n}$ and ${\bol}_{30}$ as test points. It is clear how it grows slowly for the least-squares operator, while the AFPs have lower Lebesgue constants than DLPs.

\begin{figure}[!h]
        \includegraphics[scale=0.45]{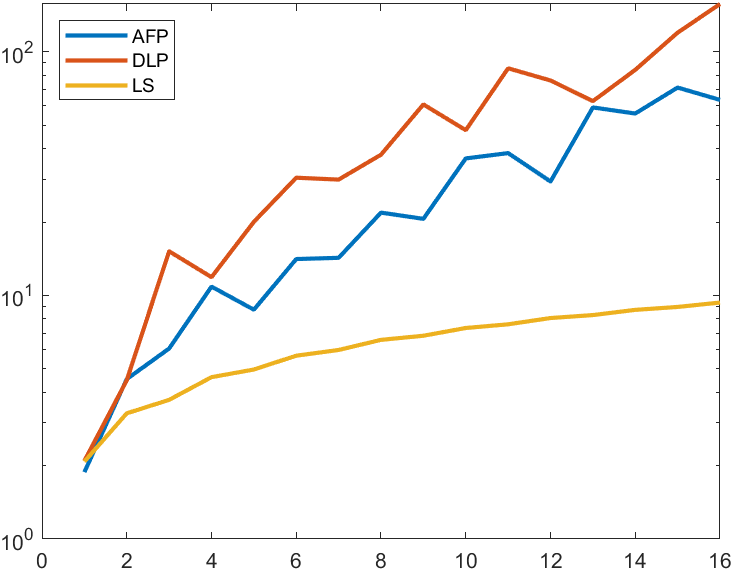}
		\caption{The Lebesgue constant of interpolation at degrees $n=1,2,\ldots,16$ and the least-squares operator {\it{uniform}}-norm for the same $n$, with mesh ${\mathcal{B}}_n$, on the set $E$ described in Example {\ref{Ex5}}.}
		\label{fig:num_02}
	\end{figure}

The analysis of Example \ref{VivWin}, that is the {\it{Viviani's window}}, is very similar to the previous one. We only modified the function $f_4$, considering the sum of the distances from $(0,0,2)$ and $(0,0,-2)$, i.e.
$$
f_4(x,y,z)=\sqrt{x^2+y^2+(z-2)^2}+\sqrt{x^2+y^2+(z+2)^2}.
$$
In Figure {\ref{fig:num_03}}, we display the interpolation/approximation errors and observe that follow the line of those of the previous example. Concerning the Lebesgue constants, from Figure {\ref{fig:num_04}} we see that again the least-squares on ${\mathcal{B}}_{n}$ provide smaller values not far from those of AFPs, while DLPs are larger and more irregular.

\begin{figure}[!h]
        \includegraphics[scale=0.45]{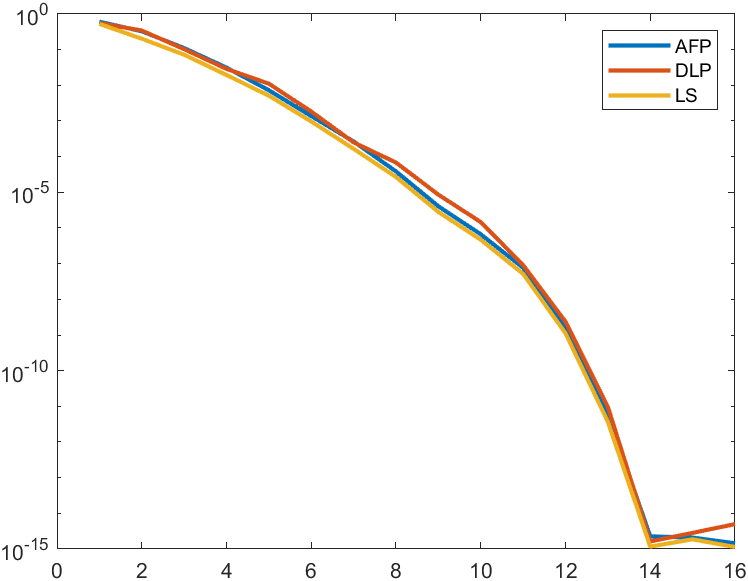}
        \includegraphics[scale=0.45]{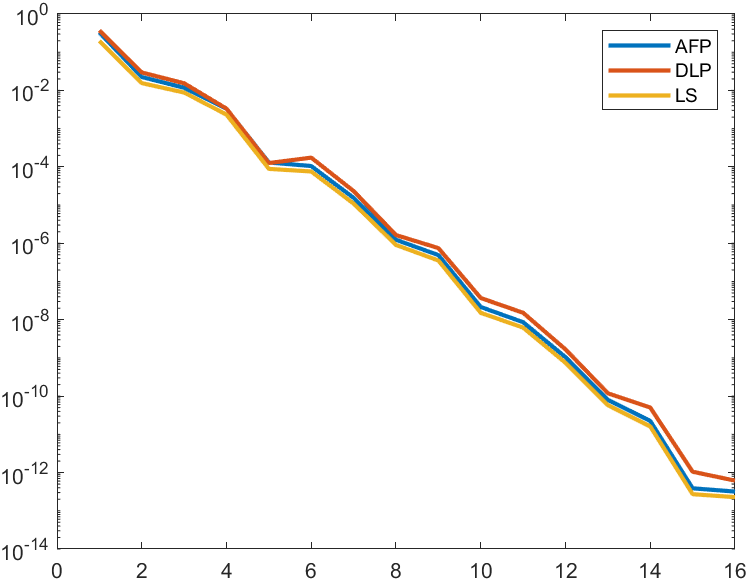}
        \includegraphics[scale=0.45]{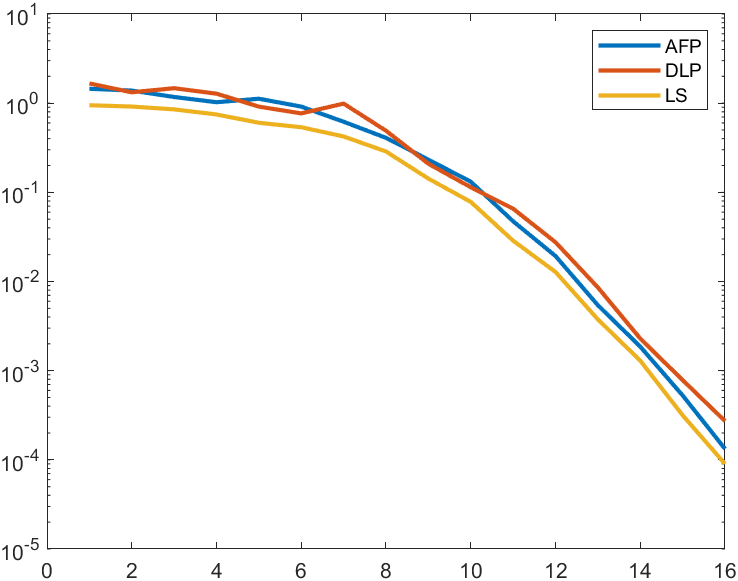}
        \includegraphics[scale=0.45]{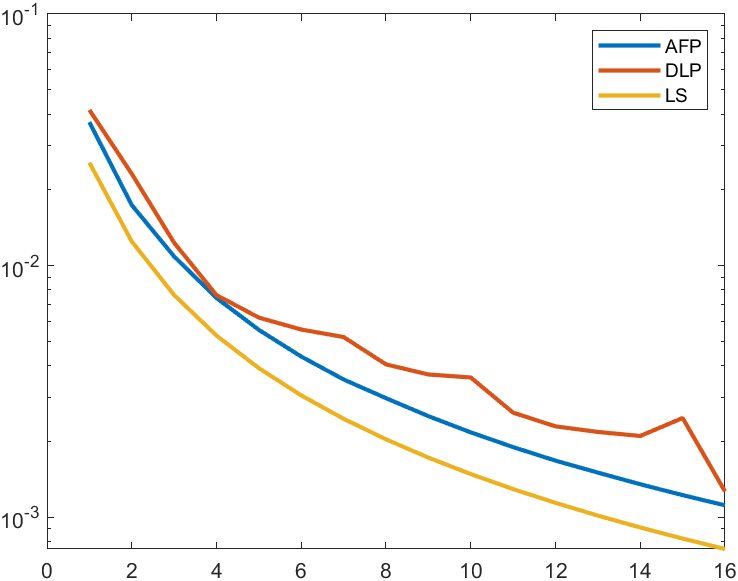}
		\caption{From top to bottom, left to right, we plot the errors in approximating the functions $f_1$, $f_2$, $f_3$, $f_4$ by means of interpolation at Approximate Fekete Points (AFP), Discrete Leja Points (DLP) and least-squares on optimal Admissible meshes ${\mathcal{B}}_n$ for $n=1,2,\ldots,16$, on the set $E$ described in Example {\ref{VivWin}}.}
		\label{fig:num_03}
	\end{figure}

\begin{figure}[!h]
        \includegraphics[scale=0.45]{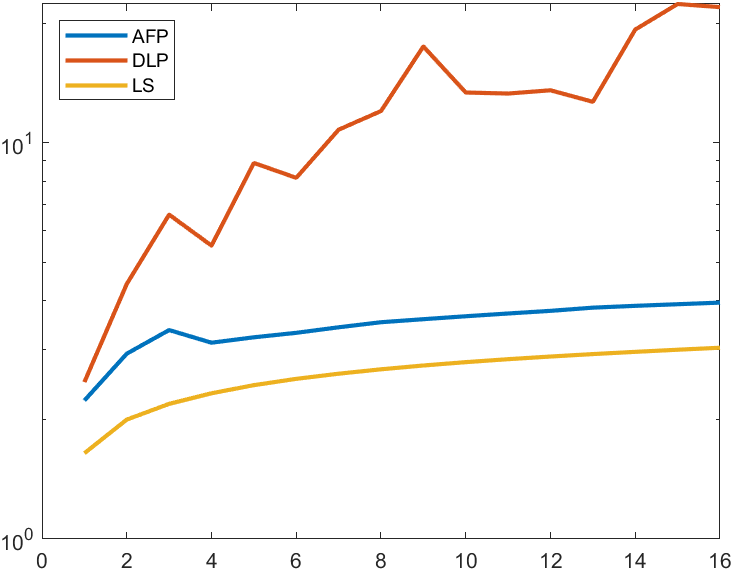}
		\caption{The Lebesgue constant of interpolation at degrees $n=1,2,\ldots,16$ and the least-squares operator {\it{uniform}}-norm for the same $n$, with mesh ${\mathcal{B}}_n$, on the set $E$ described in Example {\ref{VivWin}}.}
		\label{fig:num_04}
	\end{figure}

\vskip 5mm
{\bf Acknowledgements} 
\vskip 1mm

The authors wish to thank Professor Jean-Paul Calvi (University Paul Sabatier in Toulouse, France) for helpful discussion that initiated the research on this paper. Preparation of this article was partially supported by the National Science Centre (Narodowe Centrum Nauki), Poland, grant No. 2017/25/B/ST1/00906, by the program Excellence Initiative – Research University ID UJ at the Jagiellonian University in Kraków, 2024, by the DOR funds of the University of Padova and by the INdAM-GNCS 2024 Project “Kernel and polynomial methods for approximation and integration: theory and application software'' (A. Sommariva). 
Finally, this research has been accomplished within the RITA ``Research ITalian network on Approximation", the UMI Group TAA ``Approximation Theory and Applications", and the SIMAI Activity Group ANA\&A (A. Sommariva).

\vskip 5mm


\begin{thebibliography}{BCCK}



\bibitem{BC99} L.Bialas-Ciez, \emph{Equivalence of Markov's and Schur's inequalities on compact subsets of the complex plane}, J. Inequal. Appl., {\bf 3} (1999), 45-49.	

\bibitem{BCCK1} L. Bialas-Ciez, J.-P. Calvi and A. Kowalska, \emph{Polynomial inequalities on certain algebraic hypersurfaces}, J. Math. Anal. Appl., {\bf 459} (2018), 822-838.

\bibitem{BCCK2} L. Bialas-Ciez, J.-P. Calvi and A. Kowalska, \emph{Markov and division inequalities on algebraic sets}, submitted.


\bibitem{BCKSV} L. Bialas-Ciez, D. Kenne, A. Sommariva and M. Vianello, {\emph{Evaluating Lebesgue constants by Chebyshev polynomial meshes on cube, simplex and ball}}, Electronic Transactions in Numerical Analysis, 60 (2024), 428-445.

\bibitem{BCKS} L.Bialas-Ciez, A. Kowalska and A. Sommariva, {\emph{Polynomial meshes on algebraic sets in Matlab}}, available at {\url{https://github.com/alvisesommariva}}.

\bibitem{Bo18} L. Bos, \emph{Fekete points as norming sets}, Dolomites Res. Notes Approx., 11 Special Issue Norm Levenberg, (2018), 26–34.


\bibitem{BDMSV} L. Bos, S. De Marchi, A. Sommariva and M. Vianello, \emph{Weakly admissible meshes and discrete extremal sets}, Numer. Math. Teor. Meth. Appl., {\bf 4} (2011), 1-12.

\bibitem{BV} L. P. Bos and M. Vianello, \emph{Low cardinality admissible meshes on quadrangles, triangles and disks}, Math. Inequal. Appl., {\bf 15} (2012), 229-235.

\bibitem{CL} J.-P. Calvi and N. Levenberg, \emph{Uniform approximation by discrete least squares polynomials}, J. Approx. Theory, {\bf 152} (2008), 82–100.

\bibitem{DE}  A. Dickenstein and I. Emiris, Solving polynomial equations: foundations, algorithms, and applications, Springer 2005, ISBN 3-540-24326-7.

\bibitem{GSV} M. Gentile, A. Sommariva and M. Vianello, \emph{Polynomial approximation and quadrature on geographic rectangles}, Appl. Math. Comput., {\bf 297} (2017), 159–179. 

\bibitem{MEX} I. Houtzager, Rank-Revealing QR factorization available at
{\url{https://www.mathworks.com/matlabcentral/fileexchange/18591-rank-revealing-qr-factorization}}.

\bibitem{Kr19} A. Kro\'{o}, \emph{On the existence of optimal meshes in every convex domain on the plane}, J. Approx. Theory, {\bf 238} (2019), 26-37. 

\bibitem{LSV} P. Leopardi, A. Sommariva and M. Vianello, \emph{Optimal polynomial meshes and Caratheodory-Tchakaloff submeshes on the sphere}, Dolomites Res. Notes Approx., {\bf 10} (2017), 18-24. 


\bibitem{Pi16} F. Piazzon, \emph{Optimal polynomial admissible meshes on some classes of compact subsets of $\R^d$}, J. Approx. Theory, {\bf 207} (2016), 241–264.

\bibitem{PV10} F. Piazzon and M. Vianello, \emph{Analytic transformations of admissible meshes}, East J. Approx., {\bf 16} (2010), 389-398.



\bibitem{SV18} A. Sommariva and M. Vianello, \emph{Discrete norming inequalities on sections of sphere, ball and torus}, J.~Inequal. Spec. Funct., {\bf 9} (2018), 113–121.

\bibitem{SV21} A. Sommariva and M. Vianello, \emph{Chebyshev-Dubiner norming grids and Fekete-like interpolation on spherical triangles}, submitted.

\bibitem{Vi18} M. Vianello, \emph{An elementary approach to Polynomial Optimization on Polynomial Meshes}, J. Math. Fundam. Sci., {\bf 50} (2018), 84-91.

\bibitem{Vi18'} M. Vianello, \emph{Global polynomial optimization by norming sets on sphere and torus}, Dolomites Res. Notes Approx., {\bf 11} (2018), 10-14.



\end{thebibliography}
\end{document}